\newcommand{\R}{{\mathbb R}}
\newcommand{\N}{{\mathbb N}}
\newtheorem{theorem}{Theorem}
\newtheorem{corollary}[theorem]{Corollary}
\newtheorem{lemma}[theorem]{Lemma}
\newtheorem{proposition}[theorem]{Proposition}
\newtheorem{remark}[theorem]{Remark}
\newtheorem{example}[theorem]{Example}
\newtheorem{assumption}[theorem]{Hypothesis}
\title{Integral representations of lower semicontinuous envelopes and Lavrentiev Phenomenon for non continuous Lagrangians}
\author{Tommaso Bertin}
\begin{document}

\maketitle
\begin{abstract}
%    Let $\Omega$ be an open bounded Lipschitz subset of $\R^N$ and $\varphi\in W^{1,\infty}(\Omega)$. Given $p<q$ and the functionals \\
 % $  F_q(u)= \int_{\Omega}f(x,u(x),\nabla u(x)) dx\quad u\in \varphi+ W_0^{1,q}(\Omega)\,,$ \\
% $ \overline{F_{p}}(u)=\begin{cases} F_q(u)&\text{ if } u\in \varphi+ W^{1,q}_0(\Omega)\\
 %+\infty &\text{ if } u\in \varphi+ W^{1,p}_0(\Omega) \setminus W^{1,q}_0(\Omega)\,,
% \end{cases} $\\
%we prove $ sc^-(F_{\infty})(u)=\int_{\Omega} f^{**}(x,u(x), \nabla u(x) dx$ for a family of Lagrangians non continuous with respect to $\nabla u$
%and \\$sc^-(\overline{F_1})(u)=\int_{\Omega} f^{**}(x,u(x), \nabla u(x) dx$ for a family of autonomous Lagrangian non continuous with respect to $\nabla u$,
%where the semicontinuous envelope is respectively w.r.t. the weak$^*$ topology of $W^{1,\infty}(\Omega)$ and weak topology of $W^{1,1}(\Omega)$.
%\\
%In particular we can also exclude Lavrentiev Phenomenon between $W^{1,\infty}(\Omega)$ and $W^{1,1}(\Omega)$ for autonomous Lagrangians.

%\textcolor{red}{
We consider the functional
\[
F_\infty(u)= \int_{\Omega}f(x,u(x),\nabla u(x)) dx\quad u\in \varphi+ W_0^{1,\infty}(\Omega,\R)
\]
where $\Omega$ is an open bounded Lipschitz subset of $\R^N$ and $\varphi\in W^{1,\infty}(\Omega)$. 
We do not assume neither convexity or continuity of the Lagrangian w.r.t. the last variable. We prove that, under suitable assumptions,  the lower semicontinuous envelope of $F_\infty$ both in $\varphi+W^{1,\infty}(\Omega)$ and in the larger space $\varphi+W^{1,p}(\Omega)$ can be represented by means of the bipolar $f^{**}$ of $f$.  
In particular we can also exclude Lavrentiev Phenomenon between $W^{1,\infty}(\Omega)$ and $W^{1,1}(\Omega)$ for autonomous Lagrangians.%}
\end{abstract}

\section{Introduction}
We consider the functional \begin{equation}
 \label{functionalnew}
 F_q(u)= \int_{\Omega}f(x,u(x),\nabla u(x)) dx\qquad u\in \varphi+ W_0^{1,q}(\Omega)
\end{equation}
where $1\leq q \leq\infty$, $\Omega$ is an open bounded subset of $\R^N$, $\varphi\in W^{1,q}(\Omega)$ and $f:\Omega\times\R\times\R^N$ is a suitable Borel function that is not necessarily assumed to be convex in the last variable. Due to the lack of convexity of the Lagrangian, the functional is not sequentially lower semicontinuous with respect to the weak topology in the case $1\le q<\infty$, resp weak$^*$ topology in the case $q=\infty$. We address the problem of representing the sequential weak lower semicontinuous envelope of $F_q$.
\\
To be more precise, for every $1\le p\le q$, we define the functional 
\begin{equation}
     \overline{F_{p}}(u)=\begin{cases} F_q(u)&\text{ if } u\in \varphi+ W^{1,q}_0(\Omega)\\
 +\infty &\text{ if } u\in \varphi+ W^{1,p}_0(\Omega) \setminus W^{1,q}_0(\Omega) 
 \end{cases} 
\end{equation}
and we are interested in determining sufficient conditions for the following identity to hold
\begin{equation}\label{relaxed}
\text{sc}^-(\overline{F_{p}})(u)=\int_{\Omega}f^{**}(x,u(x),\nabla u(x)) dx\qquad \forall u\in \varphi+ W_0^{1,p}(\Omega)
\end{equation}
where $\text{sc}^-(\overline{F_{p}})(u)$ denotes the greatest sequentially weak (weak$^*$ in the case of $p=\infty$) lower semicontinuos functional, with respect to $W^{1,p}(\Omega)$, that is less or equal to $\overline{F_{p}}$ and $f^{**}$ is the convexified function of $f$ w.r.t. the last variable. 
In the literature this problem has been studied from two different points of view. From one hand if $f$ is not convex with respect to the last variable then $F_q$ is not weakly lower semicontinuous and it is interesting to consider $sc^-(F_q)$. On the other hand, even in the case where $f$ is convex w.r.t. the last variable, it is important to represent $sc^-(\overline{F_p})$ for $p<q$. In this paper we put together the two approach.
Despite the fact that the identity in \eqref{relaxed} seems very natural, we know that it does not hold true, for the case $p<q$, even for 'very regular' functionals. In fact every functional exhibiting the so called {\it Lavrentiev phenomenon} (introduced for the first time by Lavrentiev in \cite{Lavrentiev}) cannot satisfy the identity \eqref{relaxed}. In the classic example by Manià (\cite{Manià1934}) it has been shown that 
\begin{equation}
\min_{\text{id}+W_0^{1,1}([0,1])}\int_0^1 (x-u^3(x))^2 |u'(x)|^6\,dx
< \inf_{\text{id}+W_0^{1,\infty}([0,1])}\int_0^1 (x-u^3(x))^2 |u'(x)|^6\,dx    
\end{equation}
and we notice that, in this case, the Lagrangian is not only smooth in the three variables, but it is also convex w.r.t. the derivative variable. Further examples in which the minimum, or the infimum, in 
$u\in \varphi+ W^{1,p}_0(\Omega)$ is strictly less than the minimum, or the infimum, in $u\in \varphi+ W^{1,q}_0(\Omega)$, with $p<q$, can be found for one-dimensional case in \cite{BallMizel1985}, \cite{Mizel1988} and for multidimensional case in \cite{Zhykov1995}. In the present paper we focus our attention on the scalar multidimensional case but we have to mention, for the sake of completeness, that there are examples also in the vectorial case (see \cite{ButtazzoBelloni1995} for a wide list of examples). The problem of detecting conditions that prevent Lavrentiev phenomenon is important in particular for numerical approximations and engineering applications. The problem is well studied in dimension 1 with very weak hypotheses about the Lagrangian (\cite{Alberti1994}, \cite{Mariconda2023}). In higher dimension usually Lagrangians are assumed convex  (\cite{BMT2014}, \cite{Bousquet2023}, \cite{Bousquet2024}, \cite{Bousquet2025}).

The approach to the Lavrentiev phenomenon as a problem of representation of the relaxed functional has been considered for the first time, as far as we know, in \cite{ButtazzoMizel1992} where the authors introduced the notion of Lavrentiev gap at a fixed $u\in\varphi+W^{1,p}_0(\Omega)$. Precisely they say that the {\it Lavrentiev gap} occurs at $u$, for convex Lagrangians, when the difference between $\text{sc}^-(\overline{F_{p}})(u)$ and $\int_{\Omega}f(x,u(x),\nabla u(x)) dx$ is strictly positive. 

Starting from \cite{ET} and \cite{MS} the problem of integral representation for $sc^-(\overline{F_p})$ was investigated by many authors (\cite{ButtazzoDalMaso1980}, \cite{ButtazzoDalMasoDeGiorgi1983}, \cite{ButtazoDalMaso1985}, \cite{ButtazzoLeaci1985}, \cite{Mizel1988}, \cite{DalMaso1988}
\cite{Buttazzo1989}). Many  of these papers are devoted to avoid the assumption of continuity of the Lagrangian with respect to $u$ and study the property of the functional $sc^-(\overline{F_p})$ over different subsets of $\Omega$.
%Furthermore Theorem 2.6.4 in \cite{Buttazzo1989} proves the validity of \eqref{relaxed} also in the vectorial case with $1\leq p\leq q<+\infty$ without any hypotheses of continuity for $f(x,u,\xi)$. 
The main goal of our paper is to prove the validity of \eqref{relaxed} in the scalar multidimensional case without hypothesis of continuity with respect to $\xi$ for non autonomous Lagrangians in the case $p=q=\infty$ (cfr Teorem \ref{pro gen}) and for autonomous Lagrangians in the case $p=1, q=\infty$ (cfr Theorem \ref{Theo fin}).

In section \ref{sezione solo x e gradiente} we start considering Lagrangians depending by $x$ and $\nabla u$. We take inspiration by the constructive method by Ekeland and Temam in \cite{ET} to find for every $u\in\varphi+ W^{1,\infty}_0(\Omega)$ a function $v\in\varphi+ W^{1,\infty}_0(\Omega)$ sufficiently near to $u$ with $F_{\infty}(v)$ sufficiently near to $\int_{\Omega}f^{**}(x,\nabla u(x)) dx$.
We modified the proof presented in \cite[Proposition 3.2, page 330]{ET} to deal with the absence of continuity of $f$ w.r.t. $\nabla u$. We notice, in particular, the fact that our construction uses the Vitali covering Theorem.
%Our improvements are the use of Vitali's covering to correct a not clear passege of \cite[Proposition 3.2, page 330]{ET} and to do not assume any continuity with respect to $\nabla u$.
The main advantage of this argument is that it allows us to construct an explicit sequence $u_n$ converging to $u$ in $L^{\infty}(\Omega)$ and such that the value $F_{\infty}(u_n)$ converges to $\int_{\Omega}f^{**}(x,\nabla u(x)) dx$. We deduce also a first relaxation result and the validity of (\ref{relaxed}) in the case $p=q=\infty$.

In section \ref{sezionegenerale} we 
still consider the case $p=q=\infty$ and we extend the results of section \ref{sezione solo x e gradiente} to the case of a general Lagrangian $f(x,u,\xi)$ satisfying a suitable set of assumption that we will denote by Hypothesis \ref{gen ipo}.
In particular we use a truncation method for $f$ with respect to the variable $\xi$ considering the auxiliary function $f_K(x,u,\xi)$ equal to $f(x,u,\xi)$ if $\|\xi\|\leq K$ and $+\infty$ otherwise. It is interesting to note that for $f_K$ it is possible to find a sequence $u_{n,K}\overset{*}{\rightharpoonup} u$ such that $\int_{\Omega}f_K(x,u_{n,K}(x),\nabla u_{n,K}(x)) dx$ converges to $\int_{\Omega}f^{**}_K(x,u(x),\nabla u(x)) dx$. Later on, for $f(x,u,\xi)$ we pass to the limit $K\to \infty$ and it can happen, in general, that it does not exist a sequence such that $u_n\overset{*}{\rightharpoonup} u$ and $F_{\infty}(u_n)$ converges to $\int_{\Omega}f^{**}(x,u(x),\nabla u(x)) dx$.

 In section \ref{sezioneLav} we observe that under our hypotheses, the non occurrence of Lavrentiev Phenomenon for the relaxed functional $\int_{\Omega}f^{**}(x,u(x),\nabla u(x))dx$ implies the non occurrence of Lavrentiev Phenomenon for the original functional.
 
In section \ref{sezione auto} we firstly apply the general results of sections \ref{sezionegenerale} and \ref{sezioneLav} to autonomous Lagrangians. This is a case of great interest in many applications of the Calculus of Variations and has the peculiarity that the assumptions in these case appear more natural. We focus on this family of Lagrangians to apply a recent result by Bousquet (\cite{Bousquet2023}) which prove, in the case of autonomous, continuous and convex Lagrangian, for every $u\in \varphi+ W^{1,1}_0(\Omega)$ the existence of a sequence $u_n\in \varphi+W^{1,\infty}_0(\Omega)$ such that $u_n\to u$ in $W^{1,1}$ and $F_{\infty}(u_n)$ converges to $F_1(u)$. This theorem allows us, under suitable hypothesis, to exclude the Lavrentiev phenomenon for autonomous non convex Lagrangians and to prove the validity of the \eqref{relaxed} on the case where $q=\infty$ and $p=1$.
The validity of \eqref{relaxed} with this special choice of $p$ and $q$ implies that that the value of the functional
\[
\int_{\Omega}f^{**}(x,u(x),\nabla u(x)) dx
\]
can be approximated evaluating 
\[
\int_{\Omega}f(x,u(x),\nabla u(x)) dx
\]
on a suitable sequence of Lipschitz functions. This fact has a great impact on numerical estimates of the functional since the Finite Element Method, for example, relies on the use of Lipschitz functions.

Future developments will be to find some general geometrical condition on Lagrangian $
f$ to apply the result of (\cite{Bousquet2023}) to $f^{**}$ and try to extend the relaxation results to the non autonomous cases.
For the sake of completeness we cite an interesting paper (\cite{Bousquet2025}), which has to appear, about an approximation result similar to (\cite{Bousquet2023}) in the case of non autonomous Lagrangians continuous and convex.

\section{An approximation result for Lagrangians depending only on $x$ and $\nabla u$}
\label{sezione solo x e gradiente}
In this section we consider a Lagragian $\Tilde f(x,\xi)$ which satisfies following Hypotheses.

\begin{assumption}
\label{Ipo caso x}
Let $\Omega$ be an open bounded Lipschitz subset of $\R^N$, $B_K(0)\subset\R^n$ the open ball with center $0$ and radius $K>0$ and $\Tilde{f}:\Omega\times B_K(0)\to\R$ be a function such that
\begin{itemize}
\item[a)] $\Tilde{f}(x,\xi):\Omega\times B_K(0)\to\R$ coincides a.e. with a borelian function;
\item[b)] there exists $ a\in L^1(\Omega)$ such that 
    $0\leq \Tilde{f}(x,\xi)\leq a(x)$  for every $ \xi\in B_K(0)$\,;
    \item[c)] for all $ \delta>0$ then there exists $ T\subset \Omega$ compact such that $|\Omega\setminus T|<\delta$ and $\Tilde{f}(x,\xi)_{|T\times B_K(0)}$ is
continuous with respect to $x$ uniformly as $\xi$ varies in $B_K(0)$\,.
\end{itemize}
\end{assumption}
\begin{remark}
   We do not assume any continuity for $\Tilde f$ with respect to $\xi$. This is the main novelty with respect to \cite{ET}, where the Lagrangian is assumed to be a Caratheodory function, and to \cite{MS}, where the Lagrangian is upper semicontinuous with respect to $\xi$.
\end{remark}

The next two lemmas state some properties directly implied by Hypothesis \ref{Ipo caso x}.

\begin{lemma}
\label{unif cont case x}
    Let $\Tilde f:\Omega\times B_K(0)\rightarrow\R$ satisfy Hypothesis \ref{Ipo caso x} and let $T\subset\Omega$ be a compact set such that Hypothesis 1 c) is satisfied. Then $\Tilde f(x,\xi)$ is uniformly continuous as $x$ varies in $T$ uniformly as $\xi$ varies in $B_K(0)$.
\end{lemma}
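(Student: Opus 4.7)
The plan is to adapt the classical Heine--Cantor argument (continuous on a compact set implies uniformly continuous) to the parametrized setting. Hypothesis~\ref{Ipo caso x}(c) says precisely that, for every $\varepsilon>0$ and every $x_0\in T$, one can find $\delta_{x_0}>0$ such that
\[
|x-x_0|<\delta_{x_0},\ x\in T \;\Longrightarrow\; \sup_{\xi\in B_K(0)}|\tilde f(x,\xi)-\tilde f(x_0,\xi)|<\varepsilon/2 .
\]
In other words, the continuity in $x$ at each point of $T$ is already uniform in $\xi$; the only thing that is not yet uniform is the choice of $\delta_{x_0}$ as $x_0$ varies over $T$. The upgrade to uniformity in $x$ will come from the compactness of $T$.

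Concretely, I would fix $\varepsilon>0$, pick for each $x_0\in T$ a radius $\delta_{x_0}$ as above, and consider the open cover $\{B(x_0,\delta_{x_0}/2)\}_{x_0\in T}$ of $T$. By compactness extract a finite subcover associated to points $x_1,\dots,x_n\in T$, and set
\[
\delta:=\tfrac12\min_{1\le i\le n}\delta_{x_i}>0.
\]
Given any two points $x,y\in T$ with $|x-y|<\delta$, choose an index $i$ with $x\in B(x_i,\delta_{x_i}/2)$; then $|y-x_i|\le|y-x|+|x-x_i|<\delta+\delta_{x_i}/2\le\delta_{x_i}$, so both $x$ and $y$ lie within $\delta_{x_i}$ of $x_i$. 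Applying the uniform-in-$\xi$ bound twice and the triangle inequality gives
\[
|\tilde f(x,\xi)-\tilde f(y,\xi)|\le|\tilde f(x,\xi)-\tilde f(x_i,\xi)|+|\tilde f(x_i,\xi)-\tilde f(y,\xi)|<\varepsilon
\]
for every $\xi\in B_K(0)$, which is exactly the claimed joint uniform continuity.

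There is no real obstacle here: the argument is the textbook Heine--Cantor proof, with the observation that the supremum over $\xi\in B_K(0)$ passes through every step because the $\delta_{x_0}$ provided by Hypothesis~\ref{Ipo caso x}(c) is already independent of $\xi$. The only mildly delicate point is keeping track of the halved radii in the Lebesgue-number-style argument so that $x$ and $y$ can both be compared to the same selected center $x_i$; this is what forces the factor $1/2$ in the definition of $\delta$.
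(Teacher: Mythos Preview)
Your argument is correct and follows essentially the same Heine--Cantor strategy as the paper: exploit the pointwise-in-$x$, uniform-in-$\xi$ continuity from Hypothesis~\ref{Ipo caso x}(c), cover $T$ by the associated neighbourhoods, and extract a finite subcover by compactness. If anything, your version is more carefully written, since you make the half-radius/Lebesgue-number step explicit, whereas the paper leaves that detail implicit.
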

\begin{proof}
    Given $\varepsilon>0$ for every $x\in T$ there exists a $\eta_x$ such that
\begin{equation}
    |\Tilde f(\Tilde{x},\xi)-f(x,\xi)|<\varepsilon
\end{equation}
for every $\Tilde{x}\in ]x-\eta_x, x+\eta_x[\cap T$ and for every $\xi\in B_K(0)$\,.

Since $\{]x-\eta_x, x+\eta_x[\cap T\}_x$ is an open covering of $T$, which is compact, we can extract a finite subcovering and in particular there exists a $\eta>0$ such that
\begin{equation}
    |\Tilde f(x_1,\xi)-\Tilde f(x_2,\xi)|<\varepsilon
\end{equation}
for every $x_1,x_2\in T$ such that $|x_1-x_2|<\eta$ and for every $\xi\in B_K(0)$.

\end{proof}

Given a function $f(x,u,\xi)$ we indicate with $f^{**}(x,u,\xi)$ the bipolar of $f$ with respect to $\xi$. Actually $f^{**}(x,u,\xi)$ is the biggest function lower semicontinuous and convex with respect to $\xi$ lower or equal than $f(x,u,\xi)$ (cfr \cite[Proposition 4.1, pag 18]{ET}).

In the next lemma we prove that Hypothesis \ref{Ipo caso x} implies  that $\tilde f^{**}$ is continuous on $T\times B_K(0)$ for every $T$ such that Hypothesis \ref{Ipo caso x} $c)$ holds and that $\tilde f^{**}$ is a Borel function on $\Omega\times B_K(0)$.

\begin{lemma}
    \label{fstarcont case x} 
If $\Tilde f:\Omega\times B_K(0)\to\R$ satisfies Hypothesis \ref{Ipo caso x} then $\Tilde f^{**}$ is continuous on $T\times B_K(0)$ for every $T$ as in Hypothesis \ref{Ipo caso x} c).
Furthermore $\Tilde f^{**}$ is a borelian function on $\Omega\times B_K(0)$.
\end{lemma}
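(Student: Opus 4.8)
The strategy is to establish the two assertions separately, reducing both to the corresponding statements for $\Tilde f$ itself via the defining formula for the bipolar. Recall that for fixed $x$, the value $\Tilde f^{**}(x,\xi)$ is the largest convex lower semicontinuous function of $\xi$ below $\Tilde f(x,\cdot)$ on $B_K(0)$; equivalently, by Carathéodory's theorem in $\R^N$, it admits the representation
\begin{equation*}
\Tilde f^{**}(x,\xi)=\inf\Bigl\{\sum_{i=1}^{N+1}\lambda_i\,\Tilde f(x,\xi_i)\ :\ \lambda_i\ge 0,\ \sum_{i=1}^{N+1}\lambda_i=1,\ \sum_{i=1}^{N+1}\lambda_i\xi_i=\xi,\ \xi_i\in B_K(0)\Bigr\},
\end{equation*}
where one should be slightly careful about closure of the ball (working on $\overline{B_K(0)}$ or passing to the lower semicontinuous regularization in $\xi$ will handle the boundary; I would either shrink to a slightly smaller radius where needed or note that the infimum is unchanged). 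This formula is the workhorse for both parts.

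\textbf{Continuity on $T\times B_K(0)$.} Fix $T$ as in Hypothesis~\ref{Ipo caso x}~c). By Lemma~\ref{unif cont case x}, $\Tilde f$ is uniformly continuous in $x\in T$, uniformly in $\xi\in B_K(0)$; in particular, for every $\varepsilon>0$ there is $\eta>0$ with $|\Tilde f(x_1,\xi)-\Tilde f(x_2,\xi)|<\varepsilon$ whenever $x_1,x_2\in T$, $|x_1-x_2|<\eta$, for all $\xi$. Plugging this into the Carathéodory formula termwise shows $|\Tilde f^{**}(x_1,\xi)-\Tilde f^{**}(x_2,\xi)|\le\varepsilon$ under the same hypotheses, i.e. $\Tilde f^{**}$ is continuous in $x$ uniformly in $\xi$ on $T$. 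Joint continuity on $T\times B_K(0)$ then follows once we know that, for each fixed $x\in T$, the map $\xi\mapsto\Tilde f^{**}(x,\xi)$ is continuous: this is automatic because a finite convex function on an open convex subset of $\R^N$ is locally Lipschitz, hence continuous, and $\Tilde f^{**}(x,\cdot)$ is finite (it is bounded below by $0$ and above by $a(x)$ by monotonicity of the bipolar under the bounds in Hypothesis~\ref{Ipo caso x}~b)). Combining equicontinuity in $x$ with continuity in $\xi$ gives joint continuity by a standard $\varepsilon/2$ argument.

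\textbf{Borel measurability on $\Omega\times B_K(0)$.} Here I would use Hypothesis~\ref{Ipo caso x}~c) to exhaust $\Omega$: for each $k\in\N$ pick a compact $T_k\subset\Omega$ with $|\Omega\setminus T_k|<1/k$ on which the uniform continuity holds; we may take the $T_k$ increasing, and then $\Omega\setminus\bigcup_k T_k$ is Lebesgue-null. On each $T_k$ the function $\Tilde f^{**}$ is continuous (by the previous part), hence Borel on $T_k$; patching over $k$ shows $\Tilde f^{**}$ is Borel on $\bigcup_k T_k$, which is $\Omega$ up to a null set. To upgrade to a genuine Borel function on all of $\Omega\times B_K(0)$ one either (i) invokes that $\Tilde f$ coincides a.e.\ with a Borel function and the bipolar of a Borel (Carathéodory-type) integrand is Borel/normal via the measurable selection theorems applied to the Carathéodory formula, or (ii) simply redefines $\Tilde f^{**}$ on the null set $\Omega\setminus\bigcup_k T_k$ (e.g.\ set it to $0$ there), which does not affect any integral. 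I would present (ii) as it is elementary and sufficient for all later uses, mentioning (i) as a remark.

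\textbf{Main obstacle.} The delicate point is the interchange of the $x$-limit with the $\inf$ over representations in the Carathéodory formula — i.e.\ showing the modulus of continuity in $x$ truly transfers to $\Tilde f^{**}$. The subtlety is that the optimal $(\lambda_i,\xi_i)$ realizing (or nearly realizing) the infimum depend on $x$; the resolution is that the \emph{same} admissible configuration $(\lambda_i,\xi_i)$ that is $\varepsilon$-optimal at $x_1$ is still admissible at $x_2$ (the constraints $\sum\lambda_i\xi_i=\xi$, $\xi_i\in B_K(0)$ do not involve $x$), and the uniform continuity bounds $\sum\lambda_i|\Tilde f(x_1,\xi_i)-\Tilde f(x_2,\xi_i)|\le\varepsilon$ regardless of the configuration. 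A secondary nuisance is the boundary of $B_K(0)$ in the infimum; I would handle it by noting that $\Tilde f^{**}(x,\cdot)$, being convex and finite on the open ball, extends continuously, or by working with radius $K-\epsilon$ and letting $\epsilon\to0$, so this does not cause real trouble.
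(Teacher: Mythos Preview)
Your proposal is correct and largely parallels the paper's proof: both show that the modulus of continuity of $\Tilde f$ in $x$ (uniform in $\xi$) transfers to $\Tilde f^{**}$, then combine with continuity of $\Tilde f^{**}(x,\cdot)$ in $\xi$ (from finiteness and convexity) via an $\varepsilon/2$ argument, and for measurability both exhaust $\Omega$ by the compacts $T_k$.

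The one genuine difference is the mechanism for transferring the $x$-modulus to the bipolar. You work through the Carath\'eodory representation, reusing an $\varepsilon$-optimal configuration $(\lambda_i,\xi_i)$ at $x_1$ as an admissible competitor at $x_2$. The paper instead uses the maximality characterization of the bipolar directly: from $|\Tilde f(x,\xi)-\Tilde f(x_0,\xi)|<\varepsilon$ it gets $\Tilde f^{**}(x,\xi)-\varepsilon\le \Tilde f(x,\xi)-\varepsilon\le \Tilde f(x_0,\xi)$, and since the left side is convex (and l.s.c.) in $\xi$, it must lie below $\Tilde f^{**}(x_0,\xi)$; symmetry gives the other inequality. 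The paper's route is shorter and sidesteps the boundary-of-$B_K(0)$ nuisance you flag, since it never unpacks the infimum. Your route is equally valid and has the virtue of being self-contained without invoking the abstract maximality property. On the Borel part, your option~(ii) (redefining on the null exceptional set) matches how the paper actually uses the lemma later---Step~1 of Theorem~\ref{lemma} only claims $\Tilde f^{**}$ coincides a.e.\ with a Borel function---so your caution there is well placed.
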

\begin{proof}
 First of all we prove that $\Tilde f^{**}(x,\xi)$ is continuous w.r.t. $x$ in $T$ uniformly as $\xi$ varies in $B_K(0)$. We fix $x_0\in T$ and for every $\varepsilon>0$ there exists $\eta>0$ such that if $x,x_0\in T$ and $|x-x_0|<\eta$ then
\begin{equation}
    |\Tilde f(x,\xi)-\Tilde f(x_0,\xi)|<\varepsilon \quad \forall\xi\in B_K(0)
\end{equation}
and so we have
\begin{equation}
\label{cont uni case x}
    \Tilde f^{**}(x,\xi)-\varepsilon\leq \Tilde f(x,\xi)-\varepsilon\leq \Tilde f(x_0,\xi)\,.
\end{equation}
Now $\Tilde f^{**}(x,\xi)-\varepsilon$ is convex in $\xi$ and and so 
\begin{equation}
    \Tilde f^{**}(x,\xi)-\varepsilon\leq \tilde f^{**}(x_0,\xi)\,.
\end{equation}
By Lemma \ref{unif cont case x}  $\Tilde f(\cdot,\xi)$ is uniformly continuous in $T$ uniformly as $\xi$ varies in $B_K(0)$, thus we can change the roles of
 $x$ and $x_0$ in the previous inequalities and we obtain 
\begin{equation}
    |\Tilde f^{**}(x,\xi)-\Tilde f^{**}(x_0,\xi)|<\varepsilon \quad \forall\xi\in B_K(0)\,.
\end{equation} 
 Since $\Tilde f^{**}(x,\cdot)$ is continuous in $B_K(0)$ for every $x$, for every sequence $(x_n,\xi_n)\in T\times B_K(0)$ converging to $(x_0,\xi_0)\in T\times B_K(0)$ we have that
\begin{align}
\label{cont2var case x}
\lim_{n}\,|\Tilde f^{**}(x_n,\xi_n)&-\Tilde f^{**}(x_0,\xi_0)|\\
     \leq &\lim_{n}|\Tilde f^{**}(x_n,\xi_n)-\Tilde f^{**}(x_0,\xi_n)|+\lim_{n}|\Tilde f^{**}(x_0,\xi_n)-\Tilde f^{**}(x_0,\xi_0)|=0   
\end{align}
i.e. $\Tilde f^{**}$ is continuous on $T\times B_K(0)$.

Now, recalling  that
\begin{equation}
    f^{**}(x,\xi)=\lim_n(f_{|T_n\times B_K(0)})^{**}(x,\xi)
\end{equation}
with $|\Omega\setminus T_n|\to 0$, the continuity of $(f_{|T_n\times B_K(0)})^{**}$ implies that $f^{**}$ is borelian on $\Omega\times\R^N$. 

\end{proof}

The next Theorem is inspired by the analogous one in  \cite[Chapter X, Proposition 3.2]{ET}. The main novelty here is that we do not assume continuity of the Lagrangian with respect to the variable $\xi$.  Af far as we know, this is the first case in literature in which the identity \eqref{relaxed} about integral representation of $sc^-({F_{\infty})}$ is proved without assuming continuity with respect to $\xi$.

\begin{theorem}
\label{lemma}
Let $\Omega$ be an open bounded Lipschitz subset of $\R^N$, $K\in\N$ and $\Tilde{f}:\Omega\times B_K(0)\to\R$ satisfiy Hypothesis \ref{Ipo caso x}.\\
%\begin{itemize}
%\item[a)] $\Tilde{f}(x,\xi):\Omega\times B_K(0)\to\R$ coincides a.e. with a borelian function;
%\item[b)] there exists $ a\in L^1(\Omega)$ such that 
%    $0\leq \Tilde{f}(x,\xi)\leq a(x)$  for every $ \xi\in B_K(0)$\,;
%    \item[c)] for all $ \delta>0$ then there exists $ T\subset \Omega$ compact such that $|\Omega\setminus T|<\delta$ and $\Tilde{f}(x,\xi)_{|T\times B_K(0)}$ is
%continuous with respect to $x$ uniformly as $\xi$ varies in $B_K(0)$\,.
%\end{itemize}
Then for every $u\in W^{1,\infty}(\Omega)$ such that 
\begin{equation}
    \|\nabla u\|_{\infty}<K
\end{equation}
there exists a sequence $v_{n}\in u+W^{1,\infty}_0(\Omega)$ such that
\begin{equation}
    \|\nabla v_{n}\|_{\infty}<K\,,
\quad \quad
 \lim_{n}\|v_{n}-u\|_{\infty}\to 0
\end{equation}
and
\begin{equation}
   \lim_{n}\Big{|}\int_{\Omega}\Tilde{f}(x,\nabla v_{n}(x)) dx-\int_{\Omega} \Tilde{f}^{**}(x,\nabla u(x))dx  \Big{|}\to 0\,.
\end{equation}    
Furthermore
\begin{equation}
    sc^-(F_{\infty})(u)=\int_{\Omega}\Tilde{f}^{**}(x,\nabla u(x))dx
\end{equation}
where with $sc^-(F_{\infty})$ we denote the lower semicontinuous envelope of 
\begin{equation}
    F_{\infty}(u)=\int_{\Omega}\Tilde{f}(x,\nabla u(x))dx
\end{equation}
with respect to the weak$^*$ topology of $W^{1,\infty}(\Omega)$. 
\end{theorem}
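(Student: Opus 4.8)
The plan is to build the sequence $v_n$ by replacing, locally, the gradient of $u$ by a fine oscillation that takes the \emph{exact} values coming from Carathéodory's theorem for $\tilde f^{**}$, and then to read off the relaxation identity. I expect the main obstacle to be precisely this exactness requirement: since $\tilde f$ is only Borel in $\xi$, a competitor whose gradient is merely \emph{close} to the Carathéodory directions gives no control at all on $\int\tilde f(x,\nabla v_n)$, so those directions must be hit on a set of almost full measure. This forces two features absent from \cite[Ch.~X, Prop.~3.2]{ET}: a Vitali covering architecture (the oscillation must be ``frozen'' on small cubes) and a reduction to piecewise affine $u$ (a Lipschitz competitor that tends to $u$, equals $u$ on the boundary of a cube and has gradient confined to finitely many fixed values forces $u$ to be affine on that cube, since its gradient there is the constant barycenter of the atoms).

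First I would reduce to $u$ piecewise affine. It suffices to treat $u$ that is affine on each of finitely many pieces covering $\Omega$ up to a layer $Z_j$ near $\partial\Omega$ with $|Z_j|\to0$, on which $u$ is left Lipschitz and unchanged, with $\|\nabla u\|_\infty$ still $<K$ and with the approximations converging uniformly and with $\nabla$ converging in measure; the passage to a general $u\in W^{1,\infty}$ with $\|\nabla u\|_\infty<K$ is then a diagonal argument, using Lemma \ref{fstarcont case x} (so that $\tilde f^{**}$ is a normal integrand bounded by $a$ and $\int_\Omega\tilde f^{**}(x,\nabla u_j)\to\int_\Omega\tilde f^{**}(x,\nabla u)$ by dominated convergence when $\nabla u_j\to\nabla u$ in measure). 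The delicate point is producing such $u_j$ with the same boundary values \emph{and} $\|\nabla u_j\|_\infty<K$: a nodal interpolation inflates $\|\nabla\cdot\|_\infty$ by the mesh–shape constant, so instead I would mollify $u$ in the interior, interpolate the (now $C^2$) mollification on a very fine mesh, and glue back to $u$ across a collar of width much larger than the mollification scale, so that the gluing contribution $(\text{interpolant}-u)\nabla\chi$ is $o(\|\nabla u\|_\infty)$; the strict inequality $\|\nabla u\|_\infty<K$ leaves room for this.

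For the local construction, fix $\delta_n\to0$ and a Scorza–Dragoni compact $T_n$ from Hypothesis \ref{Ipo caso x}(c) with $|\Omega\setminus T_n|<\delta_n$; by Lemma \ref{unif cont case x} $\tilde f$ is uniformly continuous in $x$ on $T_n$, and by Lemma \ref{fstarcont case x} $\tilde f^{**}$ is continuous on $T_n\times B_K(0)$, with $\omega_n$ a common modulus. For a.e.\ $x_0\in T_n$ (a Lebesgue point of $a$ and of $x\mapsto\tilde f^{**}(x,\nabla u(x))$), since $\tilde f\equiv+\infty$ off $B_K(0)$ one has $\tilde f^{**}(x_0,\xi_0)=\inf\{\sum_i\lambda_i\tilde f(x_0,\xi_i):\xi_i\in B_K(0),\ \lambda_i>0,\ \sum\lambda_i=1,\ \sum\lambda_i\xi_i=\xi_0\}$ with $\xi_0=\nabla u(x_0)$, so Carathéodory gives $m\le N+2$ points $\xi_i\in B_K(0)$ with $\max_i\|\xi_i\|<K$ and weights $\lambda_i$ such that $\sum_i\lambda_i\tilde f(x_0,\xi_i)\le\tilde f^{**}(x_0,\xi_0)+\delta_n$. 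I would then invoke the classical laminate/building–block lemma: on any cube $Q$ there is a Lipschitz $w$ with $w=\xi_0\cdot x$ on $\partial Q$, $\sup_Q|w-\xi_0\cdot x|$ as small as wished, $\nabla w\in\mathrm{co}\{\xi_1,\dots,\xi_m\}+B_\varepsilon(0)$ with $\varepsilon$ as small as wished — so, choosing $\varepsilon<K-\max_i\|\xi_i\|$, one keeps $\nabla w\in B_K(0)$ — and $|\{x\in Q:\nabla w(x)=\xi_i\}|\ge(\lambda_i-\varepsilon)|Q|$. Now take a fine Vitali family of such cubes, inside single affine pieces, centered at good points $x_Q$ and small enough that the $\omega_n$–oscillation of $\tilde f$ and $\tilde f^{**}$ over $Q$, the deviations of $\tfrac1{|Q|}\int_Q a$ and of $\tfrac1{|Q|}\int_Q\tilde f^{**}(x,\nabla u)\,dx$ from their values at $x_Q$, and $|Q\setminus T_n|/|Q|$, are all $<\delta_n$; extract a countable disjoint subfamily $\{Q_k\}$ covering $\Omega$ up to a set $N_n$ with $|N_n|\le C\delta_n$; put $v_n:=u$ on $N_n$ and, on each $Q_k$, $v_n:=$ the building–block function for the Carathéodory data at $x_{Q_k}$, shifted to match the affine piece on $\partial Q_k$. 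Then $v_n\in u+W_0^{1,\infty}(\Omega)$ (it equals $u$ near $\partial\Omega$ and is continuous across every $\partial Q_k$), $\|\nabla v_n\|_\infty<K$, and $\|v_n-u\|_\infty\to0$.

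Finally the energy estimate. Splitting $\Omega=\bigcup_kQ_k\cup N_n$ and using $0\le\tilde f\le a$ with $|N_n|\to0$, the part over $N_n$ is negligible; on $Q_k$, on $\{\nabla v_n=\xi_i^k\}\cap T_n$ one replaces $\tilde f(x,\xi_i^k)$ by $\tilde f(x_{Q_k},\xi_i^k)$ up to $\omega_n(\mathrm{diam}\,Q_k)$, bounds $\tilde f$ by $a$ on $Q_k\setminus T_n$ and on the small set where $\nabla v_n\notin\{\xi_i^k\}_i$, and sums using $\sum_i\lambda_i^k\tilde f(x_{Q_k},\xi_i^k)\le\tilde f^{**}(x_{Q_k},\xi_0^k)+\delta_n$, the continuity of $\tilde f^{**}$ on $T_n$, and $\xi_0^k=\nabla u$ on $Q_k$; this yields $\limsup_n\int_\Omega\tilde f(x,\nabla v_n)\le\int_\Omega\tilde f^{**}(x,\nabla u)$. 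The reverse inequality is automatic: $\tilde f\ge\tilde f^{**}$ pointwise, $\nabla v_n\overset{*}{\rightharpoonup}\nabla u$ (as $v_n\to u$ in $L^\infty$ and $\|\nabla v_n\|_\infty<K$), and $\tilde f^{**}$ is a nonnegative normal integrand convex in $\xi$, so the classical weak lower semicontinuity theorem for convex integral functionals gives $\liminf_n\int_\Omega\tilde f(x,\nabla v_n)\ge\liminf_n\int_\Omega\tilde f^{**}(x,\nabla v_n)\ge\int_\Omega\tilde f^{**}(x,\nabla u)$; hence $\int_\Omega\tilde f(x,\nabla v_n)\to\int_\Omega\tilde f^{**}(x,\nabla u)$, which is the first part. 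For the last assertion, the sequence $v_n$ gives $sc^-(F_\infty)(u)\le\int_\Omega\tilde f^{**}(x,\nabla u)$, while for any $u_n\overset{*}{\rightharpoonup}u$ in $W^{1,\infty}(\Omega)$ one has $F_\infty(u_n)\ge\int_\Omega\tilde f^{**}(x,\nabla u_n)$ and, by the same lower semicontinuity (extending $\tilde f^{**}$ lower-semicontinuously to $\partial B_K(0)$ and by $+\infty$ beyond), $\liminf_nF_\infty(u_n)\ge\int_\Omega\tilde f^{**}(x,\nabla u)$; therefore $sc^-(F_\infty)(u)=\int_\Omega\tilde f^{**}(x,\nabla u)$.
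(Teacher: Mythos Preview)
Your proposal is correct and follows essentially the same architecture as the paper's proof: reduction to piecewise affine $u$ (the paper cites \cite[Ch.~X, Prop.~2.9]{ET} for this, whereas you sketch the mollify--interpolate--glue argument explicitly), Carath\'eodory decomposition of $\tilde f^{**}(x_0,\xi_0)$ at good points $x_0\in T_n$, the laminate building block from \cite[Ch.~X, Thm.~1.2]{ET} on small sets, a Vitali covering to assemble the local constructions, and the uniform continuity coming from Hypothesis~\ref{Ipo caso x}(c) to freeze the $x$-dependence on each Vitali piece. The only noteworthy difference is that the paper establishes the two-sided bound $\bigl|\int_\Omega\tilde f(x,\nabla v_\varepsilon)-\int_\Omega\tilde f^{**}(x,\nabla u)\bigr|<\varepsilon$ directly, whereas you obtain only the $\limsup$ inequality from the construction and recover the $\liminf$ inequality from weak lower semicontinuity of the convexified integrand; this is a mild economy on your side and is perfectly valid.
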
 

\begin{proof}

{\it Step 1.} First of all we observe by Lemma \ref{fstarcont case x} that $\Tilde f^{**}(x,\xi)$ coincides a.e. with a borelian function on $\Omega\times B_K(0)$.

{\it Step 2.} For the first part is sufficient to show that for every $\varepsilon>0$ there exists $v_{\varepsilon}\in u+W^{1,\infty}_0(\Omega)$ with $\|\nabla v_{\varepsilon}\|<K$ such that
\begin{equation}
   \|u-v_{\varepsilon}\|_{\infty}<\varepsilon, \qquad \Big |\int_{\Omega}\Tilde{f}(x,\nabla v_{\varepsilon}(x)) dx-\int_{\Omega} \Tilde{f}^{**}(x,\nabla u(x))dx\Big |<\varepsilon\,.
\end{equation}
Following the same approach as in \cite{ET} we start considering the case where $u$ is affine, i.e. $\nabla u=\overline{\xi}$ and we construct a Lipschitz function sufficiently close to $u$, preserving the boundary datum and  such that its gradient is a.e. in a suitable set. 

For every $x\in\Omega$ we can write $\Tilde{f}^{**}(x,\overline{\xi})$ as \cite[Remark 3.27]{Dac}
\begin{equation}
    \Tilde{f}^{**}(x,\overline{\xi})=\inf\left\{\sum_{i=1}^{n+1}\alpha^x_i \Tilde f(x,\xi^x_i)\Big|\,
    \alpha^x_i\geq 0,  \xi^x_i\in B_K(0),\sum_{i=1}^{n+1} \alpha^x_i=1,\,  \sum_{i=1}^{n+1}\alpha^x_i\xi_i^x=\overline{\xi}\right\}
\end{equation}
so that, for $\varepsilon>0$, we can choose $\alpha^x_i\ge 0$, $ \xi^x_i\in B_K(0)$ such that $\sum_{i=1}^{n+1} \alpha^x_i=1$,  $\sum_{i=1}^{n+1}\alpha_i^x\xi_i^x=\overline{\xi}$ and 
\begin{equation}
\sum_{i=1}^{n+1}\alpha^x_i \Tilde f(x,\xi^x_i)-\Tilde{f}^{**}(x,\overline{\xi})< \frac{\varepsilon}{9|\Omega|}\,.
\end{equation}
We fix $0<\delta<\varepsilon$ such that for every $\omega\subset \Omega$ with $|\omega|<\delta$ then
\begin{equation}
    \int_{\omega}a(x)dx<\frac{\varepsilon}{24}
\end{equation}
where $a\in L^1(\Omega)$ satisfies Hypothesis \ref{Ipo caso x} $b)$.

Hypothesis \ref{Ipo caso x} $c)$ implies there exists $T\subset\Omega$ compact such that  $\Tilde{f}^{**}_{|T\times B_K(0)}(\cdot,\overline{\xi})$ is uniformly continuous (since $T$ is compact),  $\Tilde{f}_{|T\times B_K(0)}(x,\xi)$ is uniformly
continuous with respect to $x$ uniformly as $\xi$ varies in $B_K(0)$ and $|\Omega\setminus T|<\delta$ so that
\begin{equation}
\label{stima a T}
    \int_{\Omega\setminus T} a(x)dx<\frac{\varepsilon}{24}\,.
\end{equation}
Moreover for every $x\in T$ there exists a neighbourhood of $x$, $U_x\subset \Omega$, such that 
\begin{equation}
\label{intorni}
    \forall y\in U_x\cap T\quad\quad |\Tilde{f}(y,\xi)-\Tilde{f}(x,\xi)|\leq \frac{\varepsilon}{9 |\Omega|}\quad \forall \xi\in B_K(0)
\end{equation}
and 
\begin{equation}
    \forall y\in U_x\cap T\quad\quad |\Tilde{f}^{**}(y,\overline{\xi})-\Tilde{f}^{**}(x,\overline{\xi})|\leq \frac{\varepsilon}{9 |\Omega|}\,.
\end{equation}
In particular we have 
\begin{equation}
\label{stima com conv}
      \forall y\in U_x\cap T\quad\quad 
      \bigg|\sum_{i=1}^{n+1}\alpha^x_i \Tilde{f}(y,\xi^x_i)-\Tilde{f}^{**}(y,\overline{\xi})\bigg|
      \leq \frac{\varepsilon}{3 |\Omega|}\,.
\end{equation}
Now, for every $x\in T$, there exists a regular family of closed balls of center $x$ and radius $r$ denoted as $\overline {B}_r(x)\subset U_x$, $0<r<r_x$, that covers $T$ in the sense of Vitali. Then we can apply Vitali Covering theorem (cfr \cite[Chapter IV, 3, page 109]{S}) to find a countable family of 
\begin{equation}
\label{def omega}
    \omega^j=\overline{B}_{r_j}(x_j)
\end{equation}
such that
\begin{equation}
\big|T\setminus \bigcup_j  \omega_j\big |=0\,,\quad \quad    \omega^{j_1}\cap \omega^{j_2}=\emptyset\quad j_1\neq j_2\,.
\end{equation}
We remark that in general
\begin{equation}
    T\subsetneq \bigcup_j \omega^j\,.
\end{equation}
By \cite[Chapter X, Theorem 1.2, pag. 300]{ET} we deduce that for every $\omega^j$ and
\begin{equation}
    0<\delta_j<\frac{\delta}{2^{j}}
\end{equation}
 we can find $n+1$ subsets of $\omega^j$, $\omega_i^j$ and a locally Lipschitz function $v_j$ such that 
\begin{gather}\label{prop:vj}
    \big||\omega_i^j|-\alpha^{x_j}_i |\omega^j|\big|\leq\alpha^{x_j}_i \delta_j\quad \text{for}\quad 1\leq i\leq n+1\,,\\
    \nabla v_j=\xi^{x_j}_i\quad \text{on} \quad \omega_i^j\,,\\
     \|\nabla v_j\|_{\infty}< K\quad \text{on}  \quad \omega^j\,,\\
      \|v_j-u\|_{\infty}\leq\delta\quad \text{on} \quad \omega^j\,,\\
      v_j=u\quad \text{on} \quad \partial\omega^j\,.
\end{gather}
In particular the first property implies 
\begin{equation}
\label{differenza omega}
   \big||\omega^j|-|\bigcup_i\omega^j_i|\big|\leq  \big||\omega^j|-\sum_i|\omega^j_i|\big|\leq \frac{\delta}{2^{j}}\,.
\end{equation}
We define the function
\begin{equation}
    v_{\varepsilon}(x):=
    \begin{cases}
         v_j(x) \quad \text{if} \quad x\in \omega^j\,, \\
         u(x) \quad \text{if} \quad x\in \Omega\setminus\bigcup\limits_{j=1}^{\infty}\omega^j
    \end{cases}
\end{equation}
and it easily turns out that
$v_{\varepsilon}\in u+W_0^{1,\infty}(\Omega)$,  $\|\nabla v_{\varepsilon}\|_{\infty}<K$ and, since
 $\delta< \varepsilon$,
\begin{equation}
    \|v_{\varepsilon}-u\|_{\infty}<\varepsilon\,.
\end{equation}

{\it Step 3.} Our aim now is to evaluate
\begin{equation}
   \Big{|}\int_{\Omega}\Tilde{f}(x,\nabla v_{\varepsilon}(x)) dx-\int_{\Omega} \Tilde{f}^{**}(x,\nabla u(x))dx  \Big{|}.
\end{equation}
We start observing that
\begin{align}
\label{stima A}
     \Big{|}\int_{\Omega}&\Tilde{f}(x,\nabla v_{\varepsilon}(x)) dx-\int_{\Omega} \Tilde{f}^{**}(x,\nabla u(x))dx  \Big{|}
      \\\leq&
     \ \int_{\Omega\setminus \bigcup\limits_{j=1}^{\infty}\omega^j}|\Tilde{f}(x,\nabla v_{\varepsilon}(x))-\Tilde{f}^{**}(x,\nabla u(x))|dx
    \\& +\sum_{j=1}^\infty
     \Big{|}\int_{\omega^j}\Tilde{f}(x,\nabla v_{\varepsilon}(x)) dx-\int_{\omega^j} \Tilde{f}^{**}(x,\nabla u(x))dx  \Big{|}\,   
\end{align}
 Since $|T\setminus\cup_{j=1}^\infty \omega_j|=0$, we can estimate the  first term in the right hand side of \eqref{stima A} using (\ref{stima a T}) and recalling that, by assumption $b)$, $f^{**}$ is non negative. We then obtain

\begin{equation}
    \int_{\Omega\setminus \bigcup\limits_{j=1}^{\infty}\omega^j}|\Tilde{f}(x,\nabla v_{\varepsilon}(x))-\Tilde{f}^{**}(x,\nabla u(x))|dx\leq
    \int_{\Omega\setminus T}a(x)dx<\frac{\varepsilon}{24}\,.
\end{equation}
We consider now each term in the last sum of  \eqref{stima A}.
We have that
\begin{align}
\label{omegaj}
   \Big{|}\int_{\omega^j}&\Tilde{f}(x,\nabla v_{\varepsilon}(x)) dx-\int_{\omega^j} \Tilde{f}^{**}(x,\nabla u(x))dx\Big{|}
   \\
\leq&\Big{|}\int_{\omega^j}\Tilde{f}(x,\nabla v_{\varepsilon}(x)) dx-\sum_{i=1}^{n+1}\alpha^{x_j}_i 
    \int_{\omega^j}\Tilde{f}(x,\xi^{x_j}_i)dx\Big{|}\\
&+\Big{|}\sum_{i=1}^{n+1}\alpha^{x_j}_i \int_{\omega^j}\Tilde{f}(x,\xi^{x_j}_i)dx-
   \int_{\omega^j} \Tilde{f}^{**}(x,\nabla u(x))dx\Big{|}\\ 
\end{align}
and, recalling the definition of the functions $v_\varepsilon$ and $v_j$, we can estimate the right hand side in \eqref{omegaj} as
\begin{align}\label{omegaj2}
 &\Big{|}\int_{\omega^j}\Tilde{f}(x,\nabla v_{j}(x)) dx-\sum_{i=1}^{n+1}\alpha^{x_j}_i 
    \int_{\omega^j}\Tilde{f}(x,\xi^{x_j}_i)dx\Big{|}\\
&+\Big{|}\sum_{i=1}^{n+1}\alpha^{x_j}_i \int_{\omega^j}\Tilde{f}(x,\xi^{x_j}_i)dx-
   \int_{\omega^j} \Tilde{f}^{**}(x,\nabla u(x))dx\Big{|}\\
\le&\Big{|}\sum_{i=1}^{n+1} 
   \left[\int_{\omega^j_i}\Tilde{f}(x,\xi^{x_j}_i)dx-\alpha^{x_j}_i 
    \int_{\omega^j}\Tilde{f}(x,\xi^{x_j}_i)dx\right]\Big{|}\\
&+\int_{\omega^j\setminus\bigcup_{i}\omega_i^j}\Tilde{f}(x,\nabla v_j(x))dx\\
&+\Big{|}\sum_{i=1}^{n+1}\alpha^{x_j}_i \int_{\omega^j}\Tilde{f}(x,\xi^{x_j}_i)dx-
   \int_{\omega^j} \Tilde{f}^{**}(x,\nabla u(x))dx\Big{|}\,.
\end{align}
We consider each term in the first sum of the right hand side of (\ref{omegaj2}) and we obtain
\begin{align}
\label{stima affine}
  & \bigg| 
   \int_{\omega^j_i}\Tilde{f}(x,\xi^{x_j}_i)dx-
\alpha^{x_j}_i\int_{\omega^j}\Tilde{f}(x,\xi^{x_j}_i)dx\bigg|
\\
 & \leq \bigg|\int_{\omega^j_i}\Tilde{f}(x,\xi^{x_j}_i)-\Tilde{f}(x_j,\xi^{x_j}_i)dx\bigg|
\\+
&\bigg| \int_{\omega^j_i}\Tilde{f}(x_j,\xi^{x_j}_i)dx-
\alpha^{x_j}_i  \int_{\omega^j}\Tilde{f}(x_j,\xi^{x_j}_i)dx\bigg|\\
&+\alpha^{x_j}_i\bigg|\int_{\omega^j}\Tilde{f}(x_j,\xi^{x_j}_i)-\Tilde{f}(x,\xi^{x_j}_i)dx\bigg|\,.
\end{align}
Since  $\Tilde{f}(x_j,\xi^{x_j}_i)$ is constant, by the first property in (\ref{prop:vj}), we can estimate the second term in the right hand side as follows
\begin{equation}
\label{const}
  \bigg| \int_{\omega^j_i}\Tilde{f}(x_j,\xi^{x_j}_i)dx-
\alpha^{x_j}_i  \int_{\omega^j}\Tilde{f}(x_j,\xi^{x_j}_i)dx\bigg|
\leq  \alpha^{x_j}_i\delta_j|\Tilde{f}(x_j,\xi^{x_j}_i)|\,.
\end{equation}
In order to evaluate the other two terms in \eqref{stima affine} we notice that, by the definition of $\omega^j$ in (\ref{def omega}) and the property (\ref{intorni}) 
\begin{equation}
    \forall x\in \omega_j\cap T\quad |\Tilde f(x,\xi^{x_j}_i)-\Tilde f(x_j,\xi^{x_j}_i)|\leq \frac{\varepsilon}{9 |\Omega|}
\end{equation}
and so 
\begin{equation}
   \bigg|\int_{\omega^j_i\cap T}\Tilde{f}(x,\xi^{x_j}_i)dx-
\int_{\omega^j_i\cap T}\Tilde{f}(x_j,\xi^{x_j}_i)dx\bigg|\leq \frac{\varepsilon |\omega^j_i|}{9|\Omega|}\,,
\end{equation}
\begin{equation}
    \bigg| \alpha^{x_j}_i 
   \int_{\omega^j\cap T}\Tilde{f}(x_j,\xi^{x_j}_i)dx
    -
\alpha^{x_j}_i 
   \int_{\omega^j\cap T}\Tilde{f}(x,\xi^{x_j}_i)dx\bigg|\leq \frac{\varepsilon \alpha_i^{x_j}|\omega^j|}{9|\Omega|}\,.
\end{equation}
It follows that 
\begin{multline}
\label{primo termine destro}
    \bigg| 
   \int_{\omega^j_i}\Tilde{f}(x,\xi^{x_j}_i)dx-
\alpha^{x_j}_i\int_{\omega^j}\Tilde{f}(x,\xi^{x_j}_i)dx\bigg|
\leq\\
 \frac{\varepsilon |\omega^j_i|}{9|\Omega|}+\int_{\omega_i^j\setminus T}a(x)+\alpha^{x_j}_i \delta_j|\Tilde{f}(x_j,\xi^{x_j}_i)|+
\frac{\varepsilon \alpha_i^{x_j}|\omega^j|}{9|\Omega|}+
\alpha_i^{x_j}\int_{\omega^j\setminus T}a(x)\,.
\end{multline}
Coming back to the sum on $i$ in \eqref{omegaj2} and recalling the first property of (\ref{prop:vj}) we have
\begin{multline}
    \bigg| \sum_{i=1}^{n+1}\Bigg[
   \int_{\omega^j_i}\Tilde{f}(x,\xi^{x_j}_i)dx-\alpha^{x_j}_i
\int_{\omega^j}\Tilde{f}(x,\xi^{x_j}_i)dx\Bigg]\bigg|
\leq
\\
\delta_j\max_i\bigg\{|\Tilde{f}(x_j,\xi^{x_j}_i)|\bigg\}+
\frac{\varepsilon(2 |\omega^j|)}{9|\Omega|}+\frac{\varepsilon\delta_j}{9|\Omega|}+\int_{\omega^j\setminus T}a(x)+\int_{\bigcup_i\omega^j_i\setminus T}a(x)\,.
\label{stima unione}
\end{multline}
Since $|\Tilde f(x,\xi)|\leq a(x)$ and $\Tilde{f}_{|T\times B_K(0)}(x,\xi)$ is uniformly continuous in $x$ uniformly as $\xi$ varies in $ B_K(0)$, then $\Tilde{f}_{|T\times B_K(0)}(x,\xi)$ is bounded. In fact fixed $\overline x$ then $\Tilde f(\overline x,\xi)$ is bounded for every $\xi\in B_K(0)$ and 
it exists a modulus of continuity such that
\begin{equation}
    |\Tilde{f}(x,\xi)-\Tilde{f}(\overline x,\xi)|\leq \gamma (|x-\overline x|)
\end{equation}
for every $x\in T$ and $\xi\in B_K(0)$. Thus there exists $C>0$ which does not depend by $x_j$ or $\xi^{x_j}_i$ such that
\begin{equation}
    \max_i\bigg\{|\Tilde{f}(x_j,\xi^{x_j}_i)|\bigg\}\leq C\,.
\end{equation}
Now we note that $C$ depends only by the choose of the compact $T$. So we can take $\delta_j$ sufficiently small such that 
\begin{equation}
     \frac{2\varepsilon |\omega^j|}{9|\Omega|}+\delta_j C+\frac{\varepsilon\delta_j}{9|\Omega|}\leq \frac{\varepsilon|\omega^j|}{3|\Omega|}\,.
\end{equation}
Now \eqref{stima unione}
becomes
\begin{align}
   \bigg| \sum_{i=1}^{n+1}\Bigg[
   \int_{\omega^j_i}\Tilde{f}(x,\xi^{x_j}_i)dx-\alpha^{x_j}_i
\int_{\omega^j}\Tilde{f}(x,\xi^{x_j}_i)dx\Bigg]\bigg|
\leq\\
\frac{\varepsilon|\omega^j|}{3|\Omega|}
+\int_{\omega^j\setminus T}a(x)+\int_{\bigcup_i\omega^j_i\setminus T}a(x)
\label{stima primo pezzo}
\end{align}
and we have completed the estimate of the first term in the right hand of \eqref{omegaj2}.

\noindent Now we turn to evaluate the second term.
By assumption $b)$ 
\begin{equation}
       \int_{\omega^j\setminus\bigcup_i \omega_i^j}\Tilde{f}(x,\nabla v_j(x))dx\leq 
          \int_{\omega^j\setminus\bigcup_i \omega_i^j}a(x)dx
\end{equation}
where we recall by (\ref{differenza omega}) that
\begin{equation}
   \big||\omega^j|-|\bigcup_i\omega^j_i|\big|\leq \frac{\delta}{2^{j}}\,.
\end{equation}

\noindent We split the third term in the right hand side of \eqref{omegaj2} 
\begin{multline}
    \label{secondo pezzo omega j}
    \Big|\sum_{i=1}^{n+1}\alpha^{x_j}_i 
   \int_{\omega^j}\Tilde{f}(x,\xi^{x_j}_i)dx-
   \int_{\omega^j} \Tilde{f}^{**}(x,\nabla u(x))dx\Big{|}\\
   \leq
   \Big|\sum_{i=1}^{n+1}\alpha^{x_j}_i 
   \int_{\omega^j\cap T}\Tilde{f}(x,\xi^{x_j}_i)dx-
   \int_{\omega^j\cap T} \Tilde{f}^{**}(x,\nabla u(x))dx\Big{|}
   +\int_{\omega^j\setminus T}a(x)dx\,.
\end{multline}
By (\ref{stima com conv}) we have 
\begin{equation}
    \Big|\sum_{i=1}^{n+1}\alpha^{x_j}_i 
   \int_{\omega^j\cap T}\Tilde{f}(x,\xi^{x_j}_i)dx-
   \int_{\omega^j\cap T} \Tilde{f}^{**}(x,\nabla u(x))dx\Big{|}\leq 
   \frac{\varepsilon|\omega^j|}{3|\Omega|}\,.
\end{equation}
Considering together these tree evaluations we can rewrite \eqref{omegaj} as
\begin{align}
   \Big{|}\int_{\omega^j}&\Tilde{f}(x,\nabla v_{\varepsilon}(x)) dx-\int_{\omega^j} \Tilde{f}^{**}(x,\nabla u(x))dx\Big{|}
   \\
\leq&\Big{|}\int_{\omega^j}\Tilde{f}(x,\nabla v_{\varepsilon}(x)) dx-\sum_{i=1}^{n+1}\alpha^{x_j}_i 
    \int_{\omega^j}\Tilde{f}(x,\xi^{x_j}_i)dx\Big{|}\\
&+\Big{|}\sum_{i=1}^{n+1}\alpha^{x_j}_i \int_{\omega^j}\Tilde{f}(x,\xi^{x_j}_i)dx-
   \int_{\omega^j} \Tilde{f}^{**}(x,\nabla u(x))dx\Big{|}\\ 
   \le&\Big{|}\sum_{i=1}^{n+1} 
   \left[\int_{\omega^j_i}\Tilde{f}(x,\xi^{x_j}_i)dx-\alpha^{x_j}_i 
    \int_{\omega^j}\Tilde{f}(x,\xi^{x_j}_i)dx\right]\Big{|}\\
&+\int_{\omega^j\setminus\bigcup_{i}\omega_i^j}\Tilde{f}(x,\nabla v_j(x))dx\\
&+\Big{|}\sum_{i=1}^{n+1}\alpha^{x_j}_i \int_{\omega^j}\Tilde{f}(x,\xi^{x_j}_i)dx-
   \int_{\omega^j} \Tilde{f}^{**}(x,\nabla u(x))dx\Big{|}
   \\
   &\leq
   \frac{\varepsilon|\omega^j|}{3|\Omega|}
+2\int_{\omega^j\setminus T}a(x)+\int_{\bigcup_i\omega^j_i\setminus T}a(x)+
\int_{\omega^j\setminus\bigcup_i \omega_i^j}a(x)dx+\frac{\varepsilon|\omega^j|}{3|\Omega|}\,.
\label{stima finale j}
\end{align}
We recall by (\ref{stima a T})
\begin{equation}
    \int_{\Omega\setminus T}a(x)dx<\frac{\varepsilon}{24}\,
\end{equation}
and furthermore by (\ref{differenza omega})
\begin{equation}
    \big|\cup_j(\omega^j\setminus\cup_i\omega_i^j) \big|=\sum_j(|\omega^j|-\sum_i|\omega_i^j|)\leq \delta
\end{equation}
so that also in this case
\begin{equation}
    \int_{\bigcup_j(\omega^j\setminus\bigcup_i \omega_i^j)}a(x)dx<\frac{\varepsilon}{24}\,.
\end{equation}
Since by definition $\omega^j$ are mutually disjoints we can conclude recalling the estimate \eqref{stima A} which become
\begin{align}
\Big{|}\int_{\Omega}&\Tilde{f}(x,\nabla v_{\varepsilon}(x)) dx-\int_{\Omega} \Tilde{f}^{**}(x,\nabla u(x))dx  \Big{|}
      \\&\leq
      \int_{\Omega\setminus T}a(x)dx
     +\sum_{j=1}^\infty
     \Big{|}\int_{\omega^j}\Tilde{f}(x,\nabla v_{\varepsilon}(x)) dx-\int_{\omega^j} \Tilde{f}^{**}(x,\nabla u(x))dx  \Big{|}
     \\&<\frac{\varepsilon}{24}+\frac{2|\Omega|\varepsilon}{3|\Omega|}+\frac{\varepsilon}{6}\,.   
\end{align}
i.e.
\begin{equation}
   \Big{|}\int_{\Omega}\Tilde{f}(x,\nabla v_{\varepsilon}(x)) dx-\int_{\Omega} \Tilde{f}^{**}(x,\nabla u(x))dx  \Big{|}<\varepsilon\,.
\end{equation}
\vspace{\baselineskip}

\textit{Step 4.}
 Now we consider the case where $u$ is piecewise affine,  so that we can split $\Omega$, minus a negligible set $N'$, in an union of disjoint Lipschitz open sets $\{\Omega_d\}_{d}$, with $1\leq d\leq M$, where $u$ is affine. Thanks to the previous steps, for every $d\in\{1,\dots,M\}$, we can find a function $v^d$ such that
\begin{equation}
    v^d\in u+W^{1,\infty}_0(\Omega_d)\,,\quad \|v^d-u\|_{\infty}<\varepsilon\,,\quad \|\nabla v^d\|_{\infty}<K
\end{equation} 
and
\begin{equation}
   \Big{|}\int_{\Omega_d}\Tilde{f}(x,\nabla v^d(x)) dx-\int_{\Omega_q} \Tilde{f}^{**}(x,\nabla u(x))dx  \Big{|}<\varepsilon\frac{|\Omega_q|}{|\Omega|}\,.
\end{equation}
So we define
\begin{equation}
    v_{\varepsilon}(x):=\begin{cases}
        v^d(x)\quad\text{in}\quad \Omega_d
        \\
        u(x)\quad\text{in}\quad N'
    \end{cases}
\end{equation}
and we have 
\begin{equation}
    v_{\varepsilon}\in u+W^{1,\infty}_0(\Omega)\,,\quad \|v_{\varepsilon}-u\|_{\infty}<\varepsilon\,,\quad \|\nabla v_{\varepsilon}\|_{\infty}<K
\end{equation} 
and
\begin{equation}
   \Big{|}\int_{\Omega}\Tilde{f}(x,\nabla v_{\varepsilon}(x)) dx-\int_{\Omega} \Tilde{f}^{**}(x,\nabla u(x))dx  \Big{|}<\varepsilon\,.
\end{equation}
\vspace{\baselineskip}

\textit{Step 5.}
In the general case if $u\in W^{1,\infty}(\Omega)$ by \cite[Chapter X, Proposition 2.9, page 317]{ET} then there exist a sequence of Lipschitz open sets $\Omega_{l}\subset \Omega$ and a sequence $u_l\in u+W_0^{1,\infty}(\Omega)$ such that $u_l$ is piecewise affine over $\Omega_l$ and
\begin{equation}
    |\Omega\setminus\Omega_l|\to 0\,,\quad \quad \|\nabla u_l\|_{\infty}\leq \|\nabla u\|_{\infty}+c(l) \quad\text{where }c(l)\to 0\,,
\end{equation}
\begin{equation}
    \|u_l-u\|_{\infty}\to 0\,, \quad \quad \nabla u_l\to\nabla u\quad \text{a.e. in }\Omega\,.
\end{equation}
Since $\Tilde{f}^{**}(x,\cdot)$ is continuous for every $x$, then $\Tilde{f}^{**}(x,\nabla u_l(x))$ converges a.e. to $\Tilde{f}^{**}(x,\nabla u(x))$.
 So, by assumption $b)$ and Lebesgue dominated convergence Theorem, we obtain
\begin{equation}
    \lim_{l\to +\infty}\int_{\Omega}|\Tilde{f}^{**}(x,\nabla u_l(x))-\Tilde{f}^{**}(x,\nabla{u}(x))|dx=0\,.
\end{equation}
So it exists $\Tilde{u}\in u+W_0^{1,\infty}(\Omega)$ and $\Tilde{\Omega}\subseteq \Omega$ such that $\Tilde{u}$ is piecewise affine over $\Tilde{\Omega}$,
\begin{equation}
    \int_{\Omega\setminus\Tilde{\Omega}}a(x)<\frac{\varepsilon}{4}\,,
    \quad \|\nabla\Tilde{u}\|_{\infty}<K\,,
    \quad \|\Tilde{u}-u\|_{\infty}<\frac{\varepsilon}{2}\,,
\end{equation}
and
\begin{equation}
    \int_{\Omega}|\Tilde{f}^{**}(x,\nabla\Tilde{u}(x))-\Tilde{f}^{**}(x,\nabla{u}(x))|dx<\frac{\varepsilon}{4}\,.
\end{equation}
Now, using the results of the previous steps, we can find $\Tilde{v}\in \Tilde{u}+W_0^{1,\infty}(\Tilde{\Omega})$
such that
\begin{equation}
    \|\nabla\Tilde{v}\|_{\infty}<K\,,
    \quad
    \|\Tilde{v}-\Tilde{u}\|_{\infty}<\frac{\varepsilon}{2}
\end{equation}
and
\begin{equation}
 \Big|\int_{\Tilde{\Omega}}\Tilde{f}(x,\nabla\Tilde{v}(x))-\Tilde{f}^{**}(x,\nabla{\Tilde{u}}(x))dx\Big|<\frac{\varepsilon}{4}\,.
\end{equation}
So taking 
\begin{equation}
    v_{\varepsilon}(x):=
\begin{cases}
    \Tilde{v}(x)\quad \text{in}\quad \Tilde{\Omega}
    \\
    \Tilde{u}(x)\quad \text{in}\quad \Omega\setminus\Tilde{\Omega}
\end{cases}  
\end{equation}
then we have 
\begin{equation}
    v_{\varepsilon}\in u+W^{1,\infty}(\Omega)\,,\quad \|v_{\varepsilon}-u\|_{\infty}<\varepsilon\,,\quad \|\nabla v_{\varepsilon}\|_{\infty}<K
\end{equation} 
and
\begin{equation}
   \Big{|}\int_{\Omega}\Tilde{f}(x,\nabla v_{\varepsilon}(x)) dx-\int_{\Omega} \Tilde{f}^{**}(x,\nabla u(x))dx  \Big{|}<\varepsilon\,.
\end{equation}

{\it Step 6.} Now we want to prove the second part of the statement about the integral representation of $sc^-(F_{\infty})$. By previous steps for every $u\in W^{1,\infty}(\Omega)$ there exists a sequence $u_n\in u+W^{1,\infty}_0(\Omega)$ such that
\begin{equation}
    \|\nabla u_n\|_{\infty}\leq K, \quad \|u_n-u\|_{\infty}\to 0
\end{equation}
and
\begin{equation}
    \lim_{n\to \infty}\int_{\Omega}\Tilde{f}(x,\nabla u_n(x))dx=\int_{\Omega}\Tilde{f}^{**}(x,\nabla u(x))dx\,.
\end{equation}
Up to a subsequence, we can suppose that $u_n\overset*\rightharpoonup u$, i.e. $u_n$ weakly$^*$ converges to $u$ in $W^{1,\infty}$
and we know, by a generalization of Tonelli theorem about lower semicontinuity of integral functionals with convex Lagrangian (cfr \cite[Chapter VIII, Theorem 2.1, page 243]{ET}), that
\begin{equation}
    \int_{\Omega}\Tilde{f}^{**}(x,\nabla u(x))dx\leq
    \liminf_{n\to \infty}\int_{\Omega}\Tilde{f}(x,\nabla v_n(x))dx
\end{equation}
for every $v_n\in  u+W^{1,\infty}_0(\Omega)$.\\
Thus we have that
\begin{multline}
\inf\Bigg{\{}\liminf\int_{\Omega}\Tilde{f}(x,\nabla v_n)dx\Bigg{|}
       v_n{\overset{*}{\rightharpoonup}}{u}, v_n\in u+W^{1,\infty}_0(\Omega)
    \Bigg{\}}\\
    =\int_{\Omega} \Tilde{f}^{**}(x,\nabla{u}(x))dx\,.
\end{multline}
i.e.
\begin{equation}
    sc^-(F_{\infty})(u)=\int_{\Omega} \Tilde{f}^{**}(x,\nabla{u}(x))dx
\end{equation}
where $sc^-(F_{\infty})$ is the lower semicontinuous envelope respect to the weak$^*$ topology of $W^{1,\infty}(\Omega)$.

\end{proof}

\begin{remark} We want to discuss  assumption $c)$ of Hypothesis \ref{Ipo caso x}. First of all we notice that assumption $a)$ and Lusin's Theorem would imply that for every $\xi$ and for every $\delta>0$  there exists $T_{\xi }\subset \Omega$ compact such that $|\Omega\setminus T_{\xi}|<\delta$ and $\Tilde{f}(\cdot,\xi)_{|T_{\xi}}$ is continuous.
In other words in our hypothesis c) we state a property that requires that Lusin's Theorem  holds uniformly as $\xi$ varies in $B_K(0)$. 

In \cite{ET}[Theorem 3.3, page 332], the authors assumed that $f(x,\xi)$ is a function Carathéodory, then 
Scorza-Dragoni Theorem implies that our assumption $c)$ holds true.

% Assuming $\Tilde{f}(x,\xi)$ continuous in $\xi$ (i.e. $\Tilde {f}$ ), as it is assumed in \cite{ET}, then Scorza-Dragoni Theorem implies for every $ \delta>0$ the existence of $T$ such that $|\Omega\setminus T|<\delta$ and $\Tilde{f}_{|T\times B_K(0)}(x,\xi)$ is continuous. 

Moreover we remark that the function $\Tilde{f}(x,\xi):=g(x)h(\xi)$, with $g(x)\in L^1(\Omega)$ and $h(\xi)$ borelian and bounded on bounded sets, satisfies our assumption but are not Carath\'eodory. 
\end{remark}

%\begin{remark}
  %  In the $step\, 2$ we use a Vitali's covering for $T$. This actually is a difference respect to \cite{ET} where the authors take a finite family $\omega^j:=B_{r_j}(x_j)\setminus\bigcup_{j'<j}\overline{B_{r_{j'}}}(x_{j'})$. In \cite{ET} $\omega^j$ are not Lipschitz domains and it is not obvious that $v_j\in W^{1,\infty}(\omega^j)$. On the contrary we know that if $v_j$ is locally Lipschitz, $\|\nabla v_j\|_{\infty}<K$ and $\omega^j$ is a Lipschitz domain then
%    $v_j\in W^{1,\infty}(\omega^j)$.
%\end{remark}

\section{An approximation result for the general case}
\label{sezionegenerale}
In this section we want to generalize the validity of the integral representation formula (\ref{relaxed}) with $p=q=\infty$ proved in Theorem \ref{lemma} to a general Lagrangian $f(x,u,\xi):\Omega\times \R\times \R^N\to \R$ which satisfies following hypotheses.   
%the sequential lower semicontinuous envelope of the functional
%\begin{equation}
%    F_{\infty}(u)=\int_{\Omega}f(x,u(x),\nabla u(x))dx,\quad u\in W^{1,\infty}(\Omega)
%\end{equation} 
%with respect to the weak$^*$ topology of $W^{1,\infty}(\Omega)$. In particular we want to prove
%\begin{equation}
%   sc^-(F_{\infty})(\overline{u})=\int_{\Omega} f^{**}(x,\overline{u}(x),\nabla\overline{u}(x))dx\,
%\end{equation}
%for every $\overline u\in W^{1,\infty}(\Omega)$.
%\vspace{\baselineskip}

\begin{assumption}
   \label{gen ipo}
    The function  $f(x,u,\xi):\Omega\times \R\times \R^N\to \R$ satisfies
\begin{itemize}
    \item[a)] $f(x,u,\xi):\Omega\times\R\times\R^N\to\R$ is a borelian function;
    \item [b)] for every bounded set of $B\subset \R\times \R^N$ there exists $a\in L^1(\Omega)$ such that $|f(x,u,\xi)|\leq a(x)$ for all $(u,\xi)\in B$,
    \item[c)] for every $u\in  W^{1,\infty}(\Omega)$,  for every $\Tilde B\subset\R^N$ bounded set and for every $\delta>0$  there exists $T\subset \Omega$ compact such that $|\Omega\setminus T|<\delta$ and $f(x,u(x),\xi)$ is
continuous with respect to $x\in T$ uniformly as $\xi$ varies in $\Tilde B$,
    \item[d)] for almost every $x$  the function $f(x,u,\xi)$ is continuous with respect to $u$ uniformly as $\xi$ varies in bounded sets.
\end{itemize} 
\end{assumption} 
We introduce the following auxiliary functions that will be useful in the next proposition and in the main theorem of this section. For every function $f:\Omega\times\R\times\R^N\to\R$ and for every $K$ in $\N$ we define 
 $f_K$ as
\begin{equation}
\label{def fK}
    f_K(x,u,\xi)=\begin{cases}
    & f(x,u,\xi)\quad\quad $\text{if}$ \quad \|\xi\|\leq K\,,\\
    & +\infty \quad\quad\quad\quad $\text{if}$ \quad \|\xi\|> K\,.
   \end{cases}
\end{equation}
We remark that  $(f_K)^{**}(x,u,\xi)\ge(f^{**})_K(x,u,\xi)$ for every $(x,u,\xi)\in\R\times\R^N$ and in general the strict inequality may hold.
%\vspace{\baselineskip}
Moreover it is straightforward that
\begin{equation}
    (f_K)^{**}(x,u,\xi)\geq (f_{K+1})^{**}(x,u,\xi)
\end{equation}
for every $(x,u,\xi)\in\Omega\times\R\times\R^N$ and in particular $(f_K)^{**}$ is a monotone decreasing sequence pointwise convergent to $f^{**}$.

\begin{proposition}
\label{prop K gen}
Let  $\Omega$ be an open bounded Lipschitz subset of $\R^N$ and let $f:\Omega\times\R\times\R^N\to\R$ satisfy Hypotheses \ref{gen ipo}.\\
For every $\overline{u}\in W^{1,\infty}(\Omega)$ and for every $K\in\N$ such that
\begin{equation}
    \|\overline{u}\|_{\infty}+\|\nabla \overline{u}\|_{\infty}<K
\end{equation}
there exists a sequence $u_{K,n}\in\overline{u}+W_0^{1,\infty}(\Omega)$ such that
\begin{equation}
  \|\nabla u_{K,n}\|_{\infty}<K\,,\quad \quad   u_{K,n}\overset{*}{\rightharpoonup}\overline{u}
\end{equation}
and
\begin{equation}
    \lim_{n\to \infty}\Bigg{|}\int_{\Omega}f(x,u_{K,n}(x),\nabla u_{K,n}(x))dx-\int_{\Omega} (f_K)^{**}(x,\overline{u}(x),\nabla\overline{u}(x))dx\Bigg{|} =0\,.
\end{equation}
\end{proposition}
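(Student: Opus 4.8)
The plan is to reduce to Theorem~\ref{lemma} by \emph{freezing} the state variable $u$ at $\overline u$, applying the approximation result already available for Lagrangians depending only on $(x,\xi)$, and then \emph{unfreezing}, absorbing the error of the last step by means of the uniform continuity in $u$ postulated in Hypothesis~\ref{gen ipo}~d). \textbf{Step 1 (frozen Lagrangian).} With $\overline u$ and $K$ fixed as in the statement, I would choose a bounded set $B\subset\R\times\R^N$ with $[-\|\overline u\|_\infty-1,\|\overline u\|_\infty+1]\times\overline{B_K(0)}\subset B$, let $a\in L^1(\Omega)$ be a bound for $f$ on $B$ as in Hypothesis~\ref{gen ipo}~b), and define $g:\Omega\times B_K(0)\to\R$ by
\[
g(x,\xi):=f(x,\overline u(x),\xi)+a(x),
\]
so that $0\le g(x,\xi)\le 2a(x)$. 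Then I would check that $g$ fulfils Hypothesis~\ref{Ipo caso x}: it is Borel because $f$ is Borel, $\overline u$ has a Lipschitz (hence continuous) representative and $a$ is Borel; the two-sided bound is the displayed one; and property~c) follows by intersecting the compact set furnished by Hypothesis~\ref{gen ipo}~c) (with $u=\overline u$ and $B_K(0)$ as the bounded set in the $\xi$ variable) with a compact set of almost full measure on which $a$ is continuous, provided by Lusin's theorem.

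Next, I would record that, $a(x)$ being independent of $\xi$, the bipolar in $\xi$ satisfies $g^{**}(x,\xi)=(f_K)^{**}(x,\overline u(x),\xi)+a(x)$ --- indeed $g(x,\cdot)$, extended by $+\infty$ outside $B_K(0)$, coincides with $f_K(x,\overline u(x),\cdot)+a(x)$. Since $\|\nabla\overline u\|_\infty<K$, Theorem~\ref{lemma} applies to $g$ and produces $v_n\in\overline u+W_0^{1,\infty}(\Omega)$ with $\|\nabla v_n\|_\infty<K$, $\|v_n-\overline u\|_\infty\to 0$ and $\int_\Omega g(x,\nabla v_n)\,dx\to\int_\Omega g^{**}(x,\nabla\overline u)\,dx$. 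Cancelling the common constant $\int_\Omega a$ and using the identity above, this becomes
\[
\int_\Omega f(x,\overline u(x),\nabla v_n(x))\,dx\ \longrightarrow\ \int_\Omega (f_K)^{**}(x,\overline u(x),\nabla\overline u(x))\,dx.
\]

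It then remains to replace $\overline u$ by $v_n$ in the state slot of $f$. The plan is to estimate
\[
\Bigl|\int_\Omega f(x,v_n,\nabla v_n)\,dx-\int_\Omega f(x,\overline u,\nabla v_n)\,dx\Bigr|\le\int_\Omega\bigl|f(x,v_n(x),\nabla v_n(x))-f(x,\overline u(x),\nabla v_n(x))\bigr|\,dx
\]
and to pass to the limit under the integral: for a.e.\ $x$, Hypothesis~\ref{gen ipo}~d) gives continuity of $t\mapsto f(x,t,\xi)$ at $t=\overline u(x)$, uniformly for $\xi\in\overline{B_K(0)}$, so since $\|v_n-\overline u\|_\infty\to 0$ and $\nabla v_n(x)\in B_K(0)$ the integrand tends to $0$ pointwise a.e., while for $n$ large it is bounded by $2a(x)\in L^1(\Omega)$ (then $\|v_n\|_\infty\le\|\overline u\|_\infty+1$, so both $(v_n(x),\nabla v_n(x))$ and $(\overline u(x),\nabla v_n(x))$ lie in $B$). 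Dominated convergence then makes this error vanish, and, writing $u_{K,n}:=v_n$, it follows that
\[
\int_\Omega f(x,u_{K,n}(x),\nabla u_{K,n}(x))\,dx\ \longrightarrow\ \int_\Omega (f_K)^{**}(x,\overline u(x),\nabla\overline u(x))\,dx.
\]
Since $(v_n)$ is bounded in $W^{1,\infty}(\Omega)$ and $v_n\to\overline u$ uniformly, hence in $\mathcal D'(\Omega)$, and $(\nabla v_n)$ is equibounded in $L^\infty$, the only possible weak$^*$ limit of $\nabla v_n$ is $\nabla\overline u$, so $u_{K,n}\overset{*}{\rightharpoonup}\overline u$ in $W^{1,\infty}(\Omega)$ (one may instead pass to a subsequence, as in Step~6 of the proof of Theorem~\ref{lemma}).

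The heart of the argument, and the only place where Hypothesis~\ref{gen ipo}~d) is used, is the unfreezing estimate: there the $L^\infty$-closeness of $v_n$ to $\overline u$ coming from Theorem~\ref{lemma}, the equiboundedness of $\nabla v_n$, and the $L^1$-domination by $a$ must all be used simultaneously to let dominated convergence kick in. The remaining difficulties are of bookkeeping nature: the sign normalization of Step~1 (adding $a(x)$, which forces the little Lusin argument so that property~c) of Hypothesis~\ref{Ipo caso x} is not destroyed) and the matching of the conventions underlying $g^{**}$ and $(f_K)^{**}$.
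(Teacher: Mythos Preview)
Your proposal is correct and follows essentially the same route as the paper: freeze $u$ at $\overline u$, shift by an $L^1$ function to make the frozen Lagrangian nonnegative, apply Theorem~\ref{lemma}, then unfreeze via Hypothesis~\ref{gen ipo}~d) and dominated convergence. Your version is in fact slightly more careful than the paper's in two places: you explicitly take the bounded set $B$ large enough in the $u$-direction to accommodate $v_n$ (not just $\overline u$) when invoking the $L^1$ bound, and you insert the Lusin step for $a$ so that the shifted Lagrangian $g$ genuinely inherits property~c) of Hypothesis~\ref{Ipo caso x} --- a point the paper asserts without comment.
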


\begin{proof}

By Hypothesis \ref{gen ipo} $b)$ we fix $\overline{u}\in W^{1,\infty}(\Omega)$ and we can choose $K\in\N$ and $a_K\in L^1(\Omega)$  such that 
\begin{equation}\label{kubar}
    \|\overline{u}\|_{\infty}+\|\nabla \overline{u}\|_{\infty}<K
\end{equation}
and
\begin{equation}
\label{leb}
    |f(x,\overline u(x),\xi)|\leq a_K(x)
\end{equation}
for every $\xi\in B_K(0)$.

\noindent We define $\Tilde{f}_K:\Omega\times B_K\to\R$  as 
\begin{equation}
    \Tilde{f}_K(x,\xi):=f(x,\overline{u}(x),\xi)+a_K(x)\,.
\end{equation}
We observe that $\Tilde{f}_K$ satisfies Hypothesis \ref{Ipo caso x}, in fact 
\begin{itemize}
    \item $\Tilde{f}_K(x,\xi):\Omega\times B_K\to\R$ is borelian,
    \item $0\leq \Tilde{f}_K(x,\xi)\leq 2a_K(x)$ for all $(x,\xi)\in\Omega\times B_K(0)$,
    \item for every $\delta>0$ then there exists a compact set $T\subset \Omega$  such that $|\Omega\setminus T|<\delta$ and $\Tilde{f}_K(x,\xi)$ is
continuous with respect to $x\in T$ uniformly as $\xi$ varies in $B_K(0)$\,.
\end{itemize}
We can apply Theorem \ref{lemma} to $\Tilde{f}_K$ and 
thus for every $n\in\N$ we can find $u_{K,n}$ in $\overline{u}+ W^{1,\infty}_0(\Omega)$ such that
\begin{equation}\
    \|\nabla u_{K,n}\|_{\infty}<K \,,
\quad\quad \|u_{K,n}-\overline{u}\|_{\infty}<\frac{1}{n}
\end{equation}
and 
\begin{equation}\label{kueps}
    \Big{|}\int_{\Omega}\Tilde{f}_K(x,\nabla u_{K,n}(x))dx        
    -
    \int_{\Omega} (\Tilde{f}_K)^{**}(x,\nabla\overline{u}(x))dx\Big{|}<\frac{1}{n}
\end{equation}
i.e.
\begin{equation}
    \Big{|}\int_{\Omega}f(x,\overline{u}(x),\nabla u_{K,n}(x))dx        
    -
    \int_{\Omega} (f_{K})^{**}(x,\overline{u}(x),\nabla\overline{u}(x))dx\Big{|}<\frac{1}{n}\,.
\end{equation}
Now, in order to prove our propositions, it is sufficient to show that
\begin{equation}
\label{cont in u}
    \lim_{n\to +\infty}\Big{|}\int_{\Omega}f(x,u_{K,n}(x),\nabla u_{K,n}(x))dx        
    -
    \int_{\Omega} f(x,\overline{u}(x),\nabla u_{K,n}(x))dx\Big{|}=0\,.
\end{equation}
We recall that, by \eqref{kubar} and \eqref{kueps}, there exists $\Tilde{K}$ in $\N$ such that 
\begin{equation}
    \|u_{K,n}\|_{\infty}<\Tilde{K}
\end{equation}
and so 
\begin{equation}
    (u_{K,n}(\cdot),\nabla u_{K,n}(\cdot))\in [-\Tilde{K},\Tilde{K} ]\times B_{K}(0) \quad \text{a.e. in} \quad \Omega\,.
\end{equation}
We recall that Hypothesis \ref{gen ipo} d) implies that  $f(x,u,\xi)$ is uniformly continuous on $u$ in $ [-\Tilde{K},\Tilde{K}]$ uniformly as $\xi$ varies in $B_{K}(0)$ for almost every $x$ and we observe that, from this fact and the uniform convergence of $u_{K,n}$ to $\overline{u}$ in $\Omega$, it follows 
    \begin{equation}
f(x,u_{K,n}(x),\nabla u_{K,n}(x))-f(x,\overline{u}(x),\nabla u_{K,n}(x))\to 0
\end{equation} for a.e. $x\in\Omega$.
\\
By Hypotheses \ref{gen ipo} $b)$ there exists $a_{\Tilde{K}}\in L^1(\Omega)$ that 
\begin{equation}
    |f(x,u,\xi)|\leq a_{\Tilde{K}}(x)
\end{equation}
in $\Omega\times[-\Tilde{K},\Tilde{K} ]\times B_K(0)$ and so by Lebesgue dominated convergence theorem 
\begin{equation}
    \lim_{n\to +\infty}\Big{|}\int_{\Omega}f(x,u_{K,n}(x),\nabla u_{K,n}(x))dx        
    -
    \int_{\Omega} f(x,\overline{u}(x),\nabla u_{K, n}(x))dx\Big{|}=0\,.
\end{equation}
Therefore, passing to a suitable subsequence, we get the conclusion.

\end{proof}

% We can also prove, up to a subsequence, that 
% \begin{equation}
% u_{K,n}\rightharpoonup^*\overline{u}\quad\text{in $W^{1,\infty}(\Omega)$}\,.
% \end{equation}
% In fact we know
% \begin{equation}
%     \|\nabla u_{K,n}\|_{\infty}\leq K
% \end{equation}
% and since the ball $B_{K}(0)$ of $L^{\infty}(\Omega)$ is metrizable respect to the weak* topology, up to consider a subsequence, there exists $v\in L^{\infty}(\Omega) $ such that
% \begin{equation}
%      \nabla u_{K,n}\rightharpoonup^*\ v\quad\text{in $L^{\infty}(\Omega)$}\,.
% \end{equation}
% Since
% \begin{equation}
%     u_{K,n}\to \overline{u} \quad \text{in $L^{\infty}(\Omega)$}
% \end{equation}
% we can conclude $v=\nabla u$. 

% So 
% \begin{equation}
%     u_{K,n}\rightharpoonup^*\overline{u}\quad\text{in $W^{1,\infty}(\Omega)$}
% \end{equation}

Now we are ready to prove the main result of this section.

\begin{theorem}
\label{pro gen}
    Let  $\Omega$ be an open bounded Lipschitz subset of $\R^N$ and let $f:\Omega\times\R\times\R^N\to\R$ satisfy Hypotheses \ref{gen ipo}.\\
For every $\overline{u}\in W^{1,\infty}(\Omega)$ exists a sequence $u_{n}\in\overline{u}+W_0^{1,\infty}(\Omega)$ such that
\begin{equation}
    \lim_{n\to \infty}\|u_{n}-\overline{u}\|_{\infty}=0
  \end{equation}
and
\begin{equation}
    \lim_{n\to \infty}\int_{\Omega}f(x,u_{n}(x),\nabla u_{n}(x))dx
 =
    \int_{\Omega} f^{**}(x,\overline{u}(x),\nabla\overline{u}(x))dx\,.
\end{equation}
Furthermore
\begin{equation}
    sc^-(F_{\infty})(\overline{u})
    =\int_{\Omega} f^{**}(x,\overline{u}(x),\nabla\overline{u}(x))dx
    \end{equation}
where with $sc^-(F_{\infty})$ we denote the lower semicontinuous envelope of 
\begin{equation}
    F_{\infty}(u)=\int_{\Omega}{f}(x,u,\nabla u(x))dx
\end{equation}
with respect to the weak$^*$ topology of $W^{1,\infty}(\Omega)$. 
\end{theorem}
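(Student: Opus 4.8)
The plan is to combine Proposition~\ref{prop K gen} with a limiting argument in $K$, using the monotone convergence $(f_K)^{**}\downarrow f^{**}$ and a diagonal extraction. The lower semicontinuity inequality will follow, as in Step 6 of Theorem~\ref{lemma}, from the classical Tonelli--Serrin lower semicontinuity theorem for convex integrands applied to $f^{**}$; the nontrivial direction is the construction of a recovery sequence.

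First I would fix $\overline u\in W^{1,\infty}(\Omega)$ and choose, for each large $K\in\N$, the sequence $(u_{K,n})_n$ given by Proposition~\ref{prop K gen}, so that $u_{K,n}\overset{*}{\rightharpoonup}\overline u$, $\|\nabla u_{K,n}\|_\infty<K$, $\|u_{K,n}-\overline u\|_\infty\to 0$ and
\[
\int_\Omega f(x,u_{K,n},\nabla u_{K,n})\,dx\longrightarrow \int_\Omega (f_K)^{**}(x,\overline u,\nabla\overline u)\,dx
\]
as $n\to\infty$. Next I would let $K\to\infty$: since $\|\overline u\|_\infty+\|\nabla\overline u\|_\infty<K$ eventually, and since $(f_K)^{**}(x,\overline u(x),\nabla\overline u(x))$ is monotone decreasing in $K$ and bounded below by $0$ (after the usual shift by the $L^1$ bound $a$ from Hypothesis~\ref{gen ipo}~b), so that one may invoke either monotone or dominated convergence), the integrals $\int_\Omega (f_K)^{**}(x,\overline u,\nabla\overline u)\,dx$ converge to $\int_\Omega f^{**}(x,\overline u,\nabla\overline u)\,dx$. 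Then a standard diagonal argument produces indices $n(K)$ such that, setting $u_K:=u_{K,n(K)}$, one has $\|u_K-\overline u\|_\infty\to 0$ and $\int_\Omega f(x,u_K,\nabla u_K)\,dx\to \int_\Omega f^{**}(x,\overline u,\nabla\overline u)\,dx$; relabelling $u_K$ as $u_n$ gives the asserted recovery sequence. Note that here we no longer control $\|\nabla u_n\|_\infty$ uniformly, which is exactly why the statement only claims $L^\infty$ convergence and not weak$^*$ convergence in $W^{1,\infty}$ — this is the price of passing $K\to\infty$, as already flagged in the introduction.

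For the representation formula, I would argue as follows. The inequality $sc^-(F_\infty)(\overline u)\le \int_\Omega f^{**}(x,\overline u,\nabla\overline u)\,dx$ requires a recovery sequence that is admissible for the weak$^*$ topology of $W^{1,\infty}$; for this I would \emph{not} use the diagonal sequence above but rather observe that for each fixed $K$ the sequence $(u_{K,n})_n$ \emph{is} bounded in $W^{1,\infty}$ and weak$^*$ convergent to $\overline u$, hence
\[
sc^-(F_\infty)(\overline u)\le \liminf_n \int_\Omega f(x,u_{K,n},\nabla u_{K,n})\,dx = \int_\Omega (f_K)^{**}(x,\overline u,\nabla\overline u)\,dx,
\]
and then let $K\to\infty$ using the monotone convergence of the right-hand side. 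For the reverse inequality, whenever $v_n\overset{*}{\rightharpoonup}\overline u$ in $W^{1,\infty}(\Omega)$ with $v_n\in\overline u+W^{1,\infty}_0(\Omega)$, the sequence $\nabla v_n$ is bounded in $L^\infty$, so it lies in a fixed ball $B_{K_0}(0)$; on this ball $f\ge (f_{K_0})^{**}\ge f^{**}$ pointwise in $\xi$, and $f^{**}(x,u,\cdot)$ is convex and lower semicontinuous, so the Tonelli--Serrin theorem (\cite[Chapter VIII, Theorem 2.1]{ET}, as used in Theorem~\ref{lemma}), combined with the uniform continuity of $f^{**}$ in $u$ from Hypothesis~\ref{gen ipo}~d and the uniform convergence $v_n\to\overline u$, yields $\int_\Omega f^{**}(x,\overline u,\nabla\overline u)\,dx\le \liminf_n \int_\Omega f(x,v_n,\nabla v_n)\,dx$. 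Combining the two inequalities gives $sc^-(F_\infty)(\overline u)=\int_\Omega f^{**}(x,\overline u,\nabla\overline u)\,dx$.

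The main obstacle I expect is the passage $K\to\infty$ in the recovery sequence, specifically verifying that $\int_\Omega (f_K)^{**}(x,\overline u,\nabla\overline u)\,dx\to\int_\Omega f^{**}(x,\overline u,\nabla\overline u)\,dx$: one must be sure that the relevant integrands are genuinely dominated by a single $L^1$ function independent of $K$, which is delivered by Hypothesis~\ref{gen ipo}~b applied to the fixed bounded set containing $(\overline u(x),\nabla\overline u(x))$, since $(f_K)^{**}\ge f^{**}\ge -a$ there and $(f_K)^{**}\le f_K = f \le a$ on $\{\|\xi\|\le K\}$ at the point $\xi=\nabla\overline u(x)$ once $K$ is large. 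A secondary subtlety is the reverse inequality in the lower semicontinuity step: one needs to handle the dependence of $f$ on $u$ carefully, replacing $f(x,v_n(x),\nabla v_n(x))$ by $f(x,\overline u(x),\nabla v_n(x))$ up to a vanishing error via Hypothesis~\ref{gen ipo}~d (uniformly in the relevant bounded $\xi$-ball) before applying the convexity lower semicontinuity result to the integrand $\xi\mapsto f^{**}(x,\overline u(x),\xi)$.
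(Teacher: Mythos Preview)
Your proposal is correct and follows essentially the same route as the paper: invoke Proposition~\ref{prop K gen} for each $K$, use the monotone convergence $(f_K)^{**}\downarrow f^{**}$ together with the $L^1$ domination from Hypothesis~\ref{gen ipo}~b) to pass to the limit in $\int_\Omega (f_K)^{**}(x,\overline u,\nabla\overline u)\,dx$, and extract a diagonal recovery sequence. The representation formula is likewise obtained in the paper by combining, for each fixed $K$, the recovery upper bound with the Tonelli--Serrin lower semicontinuity applied to $(f_K)^{**}$, and then letting $K\to\infty$; your version separates the two inequalities but uses the same ingredients, and your observation that any weak$^*$ convergent sequence has gradients confined to a fixed ball $B_{K_0}(0)$ is exactly what makes the lower bound go through. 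One small point: for the lower bound it is cleaner (and it is what the paper does) to apply the lower semicontinuity theorem to $(f_{K_0})^{**}$ rather than directly to $f^{**}$, since $(f_{K_0})^{**}$ is automatically bounded below and enjoys the regularity needed by \cite[Chapter VIII, Theorem 2.1]{ET}, whereas $f^{**}$ may even be $-\infty$; your chain $f\ge (f_{K_0})^{**}\ge f^{**}$ already contains this, so no change of strategy is required.
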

\begin{proof}
    
For every $K$ in $\N$ we consider the function $f_K(x,u,\xi)$ defined in \eqref{def fK} and its lower semi-continuous convex envelope  w.r.t. $\xi$ that, as usual, we denote by $(f_K)^{**}(x,u,\xi)$. We remark that
\begin{equation}
    (f_K)^{**}(x,u,\xi)\geq (f_{K+1})^{**}(x,u,\xi)
\end{equation}
for every $(u,\xi)\in\R\times\R^N$ and in particular $(f_K)^{**}$ is a monotone decreasing sequence pointwise convergent to $f^{**}$.
\\
Fixed  $\overline{u}\in W^{1,\infty}(\Omega)$ we have $(f_K)^{**}(\cdot,\overline{u}(\cdot),\nabla \overline{u}(\cdot))$ is a decreasing sequence pointwise a.e. convergent to $f^{**}(\cdot,\overline{u}(\cdot),\nabla \overline{u}(\cdot))$.
\\
Let $\overline K$ such that 
\begin{equation}
    \|\overline{u}\|_{\infty}+\|\nabla \overline{u}\|_{\infty}<\overline K\,.
\end{equation} 
By Hypothesis \ref{gen ipo} b), there exists $a_{\overline K}\in L^1(\Omega)$  such that for every $K\geq \overline{K}$ and for a.e. $x\in\Omega$
\begin{equation}
    (f_K)^{**}(x,\overline{u}(x),\nabla\overline{u}(x))
    \leq (f_K)(x,\overline{u}(x),\nabla\overline{u}(x))
    =f(x,\overline{u}(x),\nabla\overline{u}(x))
    \leq a_{\overline K}(x)\,.
\end{equation}
Monotone convergence theorem implies that
\begin{multline}
     \lim_{K\to +\infty}\int_{\Omega} a_{\overline K}(x)-(f_K)^{**}(x,\overline{u}(x),\nabla\overline{u}(x))dx
    \\=\int_{\Omega} a_{\overline K}(x)-f^{**}(x,\overline{u}(x),\nabla\overline{u}(x))dx\,
\end{multline}
and also
\begin{equation}
    \lim_{K\to +\infty}\int_{\Omega}(f_K)^{**}(x,\overline{u}(x),\nabla\overline{u}(x))dx
    =\int_{\Omega}f^{**}(x,\overline{u}(x),\nabla\overline{u}(x))dx\,.
\end{equation}
By Proposition \ref{prop K gen}, for every $\overline{u}\in W^{1,\infty}(\Omega)$ and for every $K\in \N$ such that
\begin{equation}
    \|\overline{u}\|_{\infty}+\|\nabla\overline{u}\|_{\infty}<K
\end{equation} 
there exists a sequence $u_{K,n}\in \overline{u}+W^{1,\infty}_0(\Omega)$ such that
\begin{equation}
    \|\nabla u_{K,n}\|<K\,,\quad  u_{K,n}\overset{*}{\rightharpoonup}\overline{u}
\end{equation}
and
\begin{equation}
    \lim_{n\to \infty}\Big{|}\int_{\Omega}f(x,u_{K,n}(x),\nabla u_{K,n}(x))dx        
    -
    \int_{\Omega} (f_{K})^{**}(x,\overline{u}(x),\nabla\overline{u}(x))dx\Big{|}=0\,.
\end{equation}
Taking the double limit
\begin{equation}
    \lim_{K\to +\infty}\Bigg{(}\lim_{n\to \infty}\int_{\Omega}f(x,u_{K,n}(x),\nabla u_{K,n}(x))dx\Bigg{)}
    =\int_{\Omega} f^{**}(x,\overline{u}(x),\nabla\overline{u}(x))dx\,.
\end{equation}
This implies the existence of a sequence $\overline{u}_{n}$ in $\overline{u}+W^{1,\infty}_0(\Omega)$ such that
\begin{equation}
    \quad\quad\lim_{n\to \infty}\|\overline{u}_{n}-\overline{u}\|_{\infty}=0\,,
\end{equation}
\begin{equation}
\lim_{n\to \infty}\int_{\Omega}f(x,\overline{u}_{n}(x),\nabla \overline{u}_{n}(x))dx
=\int_{\Omega} f^{**}(x,\overline{u}(x),\nabla\overline{u}(x))dx\,.
\end{equation}
Furthermore for every $K$
\begin{multline}\label{min}
     \inf\Bigg{\{}\liminf\int_{\Omega}f(x,u_n,\nabla u_n)dx\Bigg{|}
       u_n{\overset{*}{\rightharpoonup}}\,\overline{u},\,
     \|\nabla u_{n}\|_{\infty}<K
    \Bigg{\}}\\
    \leq\int_{\Omega} (f_K)^{**}(x,\overline{u}(x),\nabla\overline{u}(x))dx\,.
\end{multline}
Since $(f_K)^{**}(x,u,\xi)$ is convex and l.s.c. with respect to $\xi$ we have (for example by \cite[Chapter VIII, Theorem 2.1, page 243]{ET}) that the functional 
\begin{equation}
    \int_{\Omega}(f_K)^{**}(x,u(x),\nabla u(x))dx
\end{equation}
is sequentially lower semicontinous with respect to the weak topology of $W^{1,1}(\Omega)$. So a fortiori
\begin{multline}\label{magg}
     \inf\Bigg{\{}\liminf\int_{\Omega}f(x,u_n,\nabla u_n)dx\Bigg{|}
       u_n{\overset{*}{\rightharpoonup}}\,\overline{u},\,
     \|\nabla u_{n}\|_{\infty}<K
    \Bigg{\}}\\
    \geq\int_{\Omega} (f_K)^{**}(x,\overline{u}(x),\nabla\overline{u}(x))dx\,
\end{multline}
and then \eqref{min} and \eqref{magg} imply that the equality holds
\begin{multline}
\inf\Bigg{\{}\liminf\int_{\Omega}f(x,u_n,\nabla u_n)dx\Bigg{|}
       u_n{\overset{*}{\rightharpoonup}}\,\overline{u},\,
     \|\nabla u_{n}\|_{\infty}<K
    \Bigg{\}}\\
    =\int_{\Omega} (f_K)^{**}(x,\overline{u}(x),\nabla\overline{u}(x))dx\,.
\end{multline}
Finally we have that, for $K\to +\infty$, 
\begin{equation}
    \inf\Bigg{\{}\liminf\int_{\Omega}f(x,u_n,\nabla u_n)dx\Bigg{|}u_n{\overset{*}{\rightharpoonup}}\,\overline{u}\Bigg{\}}=\int_{\Omega} f^{**}(x,\overline{u}(x),\nabla\overline{u}(x))dx\,.
\end{equation}
i.e.
\begin{equation}
    sc^-(F_{\infty}(\overline{u}))
    =\int_{\Omega} f^{**}(x,\overline{u}(x),\nabla\overline{u}(x))dx\,.
    \end{equation}
\end{proof}

\begin{remark}
We notice that Hypothesis \ref{gen ipo} d) on the continuity of the lagrangian w.r.t. $u$ is strongly used in the proof of our result. The same assumption is also present in previous papers \cite{ET} and \cite{MS}. 
\end{remark}

\begin{remark}
We point out the fact that, as a corollary, we can replace Hypotheses \ref{gen ipo} c) and d) with the more restrictive  request that for every bounded set $\Tilde B\subset\R^N$  and for every $\delta>0$  there exists a compact set $T\subset \Omega$  such that $|\Omega\setminus T|<\delta$ and $f(x,u,\xi)$ is
continuous with respect to $(x,u)\in T\times\R$ uniformly as $\xi$ varies in $\Tilde B$.

% \textcolor{red}{pensare a) le due cose sono equivalenti? b) se cambiare enunciati fissando $\overline u$.}
%     Since for almost every $x$ then $f(x,u,\xi)$ is continuous with respect to $u$, by Scorza Dragoni Theorem (reported in \cite[Chapther VIII, page 235]{ET}) for every $\xi$ and for every $\delta>0$ there exists $T_{\xi}\subset\Omega$ compact such that $|\Omega\setminus T_{\xi}|<\delta$ and $f(x,u,\xi)$ is continuous with respect to $(x,u)\in T_{\xi}\times \R$.

% We could assume that for every $\Tilde B\subset\R^n$ bounded set and for every $\delta>0$  there exists $T\subset \Omega$ compact such that $|\Omega\setminus T|<\delta$ and $f(x,u,\xi)$ is
% continuous with respect to $(x,u)\in T\times\R$ uniformly as $\xi$ varies in $\Tilde B$.
\end{remark}

\begin{remark} Proposition \ref{prop K gen} can be seen as a generalization of Theorem 3.7 in \cite[Chapter X]{ET}, at least for the case of Lipschitz functions. In particular we underline that in \cite{ET} the authors assume that the lagrangian is a Carathéodory function, continuous w.r.t. $(u,\xi)$.

%     The hypothesis assumed in Ekeland-Temam (\cite[Chapter X, section 3.3]{ET}) imply Hypothesis \ref{gen ipo}. In particular in \cite{ET} $f(x,u,\xi)$ is continuous on $(u,\xi)$ in its effective domain (that we assume $\R\times\R^n$) for almost every $x$. So by Scorza-Dragoni Theorem for every $\delta>0$ then there exists $T\subset \Omega$ compact such that $|\Omega\setminus T|<\delta$ and $f(x,u,\xi)$ is
% continuous on $T\times \R\times \R^n$.
\end{remark}

\begin{remark}
In \cite{MS} is presented a relaxation result under the assumption that $f(x,u,\xi)$ is  continuous on $u$ uniformly as $\xi$ varies in a bounded set of $\R^N$ and $f$ is also assumed to be upper semicontinuous with respect to $\xi$. 
Thus $f(x,u,\xi)$ is upper semicontinuous with respect to $(u,\xi)$. In the next examples we show that our Hypotheses \ref{gen ipo} c) and d) are neither more general nor more restrictive than the one presented in \cite{MS}.
\end{remark}

\begin{example}

% By \cite[Chapter VIII, Theorem 1.1, page 232]{ET}
% for every $\delta>0$ then there exists $T\subset \Omega$ compact such that $|\Omega\setminus T|<\delta$ and $f(x,u,\xi)$ is upper semicontinuous on $T\times \R\times \R^n$.

% In general if $f(x,\xi)$ is only upper semicontinuous we cannot find for every $B\subset\R^n $ and $\delta>0$ a $T\subset \Omega$ compact such that $|\Omega\setminus T|<\delta$ and $f(x,\xi)$ is
% continuous with respect to $x\in T$ uniformly as $\xi$ varies in $B$.

We consider the set $\Omega:=]-\frac{\pi}{2},\frac{\pi}{2}[$ and the function $f:]-\frac{\pi}{2},\frac{\pi}{2}[\times \R\to\R$ defined by

\begin{equation}
    f(x,\xi):=
\begin{cases}
        0\quad\text{if}\quad \xi>\tan(x)
        \\
        1\quad\text{if}\quad \xi\leq\tan(x)\,.
    \end{cases}
\end{equation}
It is easy to check that $f(x,\xi)$ is upper semicontinuous on $\Omega\times\R$. On the other hand, given $M\in\R$ and $\delta<2\arctan M$, we have that,  for every $T\subset]-\frac{\pi}{2},\frac{\pi}{2}[$ such that $|]-\frac{\pi}{2},\frac{\pi}{2}[\setminus T|<\delta$, there exists $\bar x\in T$ such that $f$ is not continuous at $(\bar x, \tan \bar x)$.  This shows that the assumptions in \cite{MS} do not imply our hypotheses.
\end{example}

\begin{example}
On the other side
if $f(x,u,\xi):=g(x,u)h(\xi)$ with
\begin{equation}
    g(x,u):=\|x\|+|u|
\end{equation}
and
\begin{equation}
    h(\xi):=\begin{cases}
        1\quad \text{if} \quad \xi\in\mathbb{Q} \\
        -1 \quad \text{if} \quad \xi\in \R \setminus\mathbb{Q}
    \end{cases}
\end{equation}  then $f(x,u,\xi)$ satisfies general hypotheses but it is not upper semicontinuous with respect to $\xi$.
\end{example} 

\section{Lavrentiev Phenomenon}
\label{sezioneLav}
In this section we apply the results of previous sections to show that whenever the Lavrentiev phenomenon does not occur for the functional 
\begin{equation}
\int_{\Omega}f^{**}(x,u(x),\nabla u(x)) dx
\end{equation}
then
it  does not occur also for the functional
\begin{equation}
\int_{\Omega}f(x,u(x),\nabla u(x)) dx.
\end{equation}
%In particular is to prove the following theorem.
\begin{theorem}
\label{theo Lavr}
    Let $\Omega$ be an open bounded Lipschitz subset of $\R^N$, let  $f:\Omega\times\R\times\R^N\rightarrow \R$ satisfy Hypotheses \ref{gen ipo}, $\varphi\in  W^{1,\infty}(\Omega)$ and $1\leq p<+\infty$. 
    
    If
    \begin{equation}
    \label{Ipo gen}
\inf_{\varphi+W^{1,p}_0(\Omega)}\int_{\Omega}f^{**}(x,u(x),\nabla u(x))dx=\inf_{\varphi+W^{1,\infty}_0(\Omega)}\int_{\Omega}f^{**}(x,u(x),\nabla u(x))dx
\end{equation}
then
\begin{equation}
\inf_{\varphi+W^{1,p}_0(\Omega)}\int_{\Omega}f(x,u(x),\nabla u(x))dx=\inf_{\varphi+W^{1,\infty}_0(\Omega)}\int_{\Omega}f(x,u(x),\nabla u(x))dx\,.
\end{equation}  
\end{theorem}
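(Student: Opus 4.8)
The plan is to prove the two inequalities separately, the nontrivial one being an immediate consequence of Theorem \ref{pro gen}. Let me abbreviate
\[
I_p:=\inf_{\varphi+W^{1,p}_0(\Omega)}\int_\Omega f(x,u,\nabla u)\,dx,\qquad
I_\infty:=\inf_{\varphi+W^{1,\infty}_0(\Omega)}\int_\Omega f(x,u,\nabla u)\,dx,
\]
and let $J_p,J_\infty$ denote the analogous infima with $f^{**}$ in place of $f$. Since $W^{1,\infty}_0(\Omega)\subset W^{1,p}_0(\Omega)$ we have at once $I_p\le I_\infty$ (and $J_p\le J_\infty$), so it suffices to establish $I_\infty\le I_p$. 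Moreover, because $f^{**}(x,u,\cdot)$ is a pointwise minorant of $f(x,u,\cdot)$, integration gives $J_p\le I_p$ and $J_\infty\le I_\infty$; all of this is valid in the extended reals, so no finiteness of the infima is needed.

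The key step is to show $I_\infty=J_\infty$. The inequality $J_\infty\le I_\infty$ was just noted. For the reverse, fix an arbitrary $\bar u\in\varphi+W^{1,\infty}_0(\Omega)\subset W^{1,\infty}(\Omega)$. By Theorem \ref{pro gen} there is a sequence $u_n\in\bar u+W^{1,\infty}_0(\Omega)=\varphi+W^{1,\infty}_0(\Omega)$ with $\|u_n-\bar u\|_\infty\to0$ and
\[
\lim_{n\to\infty}\int_\Omega f(x,u_n,\nabla u_n)\,dx=\int_\Omega f^{**}(x,\bar u,\nabla\bar u)\,dx.
\]
Each $u_n$ is admissible for the $W^{1,\infty}$ problem, so $I_\infty\le\int_\Omega f(x,u_n,\nabla u_n)\,dx$ for all $n$; letting $n\to\infty$ gives $I_\infty\le\int_\Omega f^{**}(x,\bar u,\nabla\bar u)\,dx$. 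Taking the infimum over $\bar u\in\varphi+W^{1,\infty}_0(\Omega)$ yields $I_\infty\le J_\infty$, hence $I_\infty=J_\infty$.

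It remains only to chain the identities. Invoking hypothesis \eqref{Ipo gen}, i.e. $J_p=J_\infty$, we obtain
\[
J_p\le I_p\le I_\infty=J_\infty=J_p,
\]
so all four quantities coincide; in particular $I_p=I_\infty$, which is the assertion.

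The whole content is carried by Theorem \ref{pro gen}; the only point requiring a moment's care is that the $L^\infty$-approximating functions produced there stay inside the affine space $\varphi+W^{1,\infty}_0(\Omega)$, which is automatic since they differ from $\bar u$ by elements of $W^{1,\infty}_0(\Omega)$. No coercivity, compactness, or lower bound on $f$ is used, so the argument applies verbatim even when the infima equal $-\infty$.
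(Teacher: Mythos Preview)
Your proof is correct and follows essentially the same route as the paper: both reduce the claim to the identity $I_\infty=J_\infty$ (obtained from Theorem \ref{pro gen}) and then chain this with the trivial inequalities $J_p\le I_p\le I_\infty$ and the hypothesis $J_p=J_\infty$. If anything, you are slightly more explicit in deriving $I_\infty\le J_\infty$ from the approximation statement of Theorem \ref{pro gen}, whereas the paper simply cites that theorem for the equality of the two infima.
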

\begin{proof}

The properties of $f$ and $f^{**}$ imply that,
for every $\varphi\in W^{1,\infty}(\Omega)$, 
\begin{equation}
\label{con p gen}
    \inf_{\varphi+W^{1,q}_0(\Omega)}\int_{\Omega}f^{**}(x,u(x),\nabla u(x))dx
    \leq
 \inf_{\varphi+W^{1,q}_0(\Omega)}\int_{\Omega}f(x,u(x),\nabla u(x))dx\
\end{equation}
for every $1\leq q \le \infty$. 
% and 
% \begin{equation}
% \label{con inf gen}
%     \inf_{\varphi+W^{1,\infty}_0(\Omega)}\int_{\Omega}f^{**}(x,u(x),\nabla u(x))dx
%     \leq
%     \inf_{\varphi+W^{1,\infty}_0(\Omega)}\int_{\Omega}f(x,u(x),\nabla u(x))dx\,.
% \end{equation}
% $\Omega$ is bounded, so $\varphi+W^{1,\infty}_0(\Omega)$ is contained in $\varphi+W^{1,p}_0(\Omega)$. For this reason 
We have also that
\begin{equation}
\label{ine f gen}
    \inf_{\varphi+W^{1,p}_0(\Omega)}\int_{\Omega}f(x,u(x),\nabla u(x))dx
    \leq
    \inf_{\varphi+W^{1,\infty}_0(\Omega)}\int_{\Omega}f(x,u(x),\nabla u(x))dx\
\end{equation}
and, using both hypothesis (\ref{Ipo gen}) and (\ref{con p gen}) we get
% \begin{equation}
% \label{ine f** gen}
%     \inf_{\varphi+W^{1,p}_0(\Omega)}\int_{\Omega}f^{**}(x,u(x),\nabla u(x))dx
%     =
%     \inf_{\varphi+W^{1,\infty}_0(\Omega)}\int_{\Omega}f^{**}(x,u(x),\nabla u(x))dx\,.
% \end{equation}
% and so from (\ref{con p gen})
\begin{equation}
    \inf_{\varphi+W^{1,\infty}_0(\Omega)}\int_{\Omega}f^{**}(x,u(x),\nabla u(x))dx
    \leq
    \inf_{\varphi+W^{1,p}_0(\Omega)}\int_{\Omega}f(x,u(x),\nabla u(x))dx\,.
\end{equation}
Furthermore Theorem \ref{pro gen} states that
\begin{equation}
     \inf_{\varphi+W^{1,\infty}_0(\Omega)}\int_{\Omega}f(x,u(x),\nabla u(x))dx
   =
    \inf_{\varphi+W^{1,\infty}_0(\Omega)}\int_{\Omega}f^{**}(x,u(x),\nabla u(x))dx
\end{equation}
and so 
\begin{multline}\label{lavr non conv}
    \inf_{\varphi+W^{1,\infty}_0(\Omega)}\int_{\Omega}f^{**}(x,u(x),\nabla u(x))dx
    \leq
    \inf_{\varphi+W^{1,p}_0(\Omega)}\int_{\Omega}f(x,u(x),\nabla u(x))dx\\
    \leq \inf_{\varphi+W^{1,\infty}_0(\Omega)}\int_{\Omega}f(x,u(x),\nabla u(x))dx
   =
    \inf_{\varphi+W^{1,\infty}_0(\Omega)}\int_{\Omega}f^{**}(x,u(x),\nabla u(x))dx\,.
\end{multline}
It follows that all the inequalities in \eqref{lavr non conv} are  equalities, proving the thesis.
    
\end{proof}

% The only possibility is 
% \begin{multline}
%     \inf_{\varphi+W^{1,\infty}_0(\Omega)}\int_{\Omega}f^{**}(x,u(x),\nabla u(x))dx
%     =
%     \inf_{\varphi+W^{1,p}_0(\Omega)}\int_{\Omega}f(x,u(x),\nabla u(x))dx\nonumber\\
%     = \inf_{\varphi+W^{1,\infty}_0(\Omega)}\int_{\Omega}f(x,u(x),\nabla u(x))dx
%    =
%     \inf_{\varphi+W^{1,\infty}_0(\Omega)}\int_{\Omega}f^{**}(x,u(x),\nabla u(x))dx\nonumber
% \end{multline}
% in particular
% \begin{equation}
%     \inf_{\varphi+W^{1,p}_0(\Omega)}\int_{\Omega}f(x,u(x),\nabla u(x))dx\nonumber
%     = \inf_{\varphi+W^{1,\infty}_0(\Omega)}\int_{\Omega}f(x,u(x),\nabla u(x))dx\,.
% \end{equation}

\section{Approximation results for the autonomous case}
\label{sezione auto}
Now we focus on autonomous Lagrangians, in order to apply some recent results about this case. As far as we know, also in the autonomous case our results were never proved before without assuming the continuity of Lagrangian respect to $\xi$. So we prefer report explicitly them also for Lagrangians which satisfy the following hypotheses which imply Hypothesis \ref{gen ipo} and that are quit natural. 
\begin{assumption}
\label{Bas Hyp}
The function $f:\R\times\R^N\rightarrow\R$ satisfies 
\begin{enumerate}[i)]
    \item $f(u,\xi):\R\times\R^N\rightarrow\R$ is borelian,
    \item
     $f(u,\xi)$ is continuous with respect to $u$ uniformly as $\xi$ varies on each bounded set of $\R^N$, 
\item $f(u,\cdot)$ is bounded on bounded sets of $\R^N$ for every $u\in \R$.
\end{enumerate}
\end{assumption}

%\begin{remark}
%   We do not assume any continuity for $f$ with respect to $\xi$. This is the main novelty with respect to \cite{ET}, where the Lagrangian is assumed to be a Caratheodory function, and to \cite{MS}, where the Lagrangian is upper semicontinuous with respect to $\xi$. 
%\end{remark}

Now we show that Hypotheses \ref{Bas Hyp} imply Hypotheses \ref{gen ipo}.
\begin{lemma}\label{ipotesi}
    Let  $\Omega$ be an open bounded Lipschitz subset of $\R^N$ and let $f:\R\times\R^N\to\R$ satisfy Hypothesis \ref{Bas Hyp}. Then $f$ satisfies Hypotheses \ref{gen ipo}. 
\end{lemma}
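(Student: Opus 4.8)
The plan is to verify the four conditions a)--d) of Hypothesis \ref{gen ipo} for the Lagrangian $f(x,u,\xi):=f(u,\xi)$, which is independent of $x$; the absence of the $x$ variable makes a), b) and d) essentially formal, and only c) needs a short argument using the regularity of $u$. Condition a) holds because $f(u,\xi)$ is Borel on $\R\times\R^N$ by i), so its composition with the projection $(x,u,\xi)\mapsto(u,\xi)$ is Borel on $\Omega\times\R\times\R^N$. Condition d) is exactly ii): since $f(x,u,\xi)=f(u,\xi)$ does not depend on $x$, it is continuous in $u$ uniformly as $\xi$ ranges over bounded sets, for \emph{every} $x$, hence for a.e.\ $x$.

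For condition b) I would fix a bounded $B\subset\R\times\R^N$, pick $R>0$ with $B\subset[-R,R]\times\overline{B_R(0)}$, and observe that the map $u\mapsto\sup_{\xi\in\overline{B_R(0)}}|f(u,\xi)|$ is finite-valued by iii) and continuous by ii), since
\[
\Bigl|\ \sup_{\xi\in\overline{B_R(0)}}|f(u,\xi)|-\sup_{\xi\in\overline{B_R(0)}}|f(u',\xi)|\ \Bigr|\le\sup_{\xi\in\overline{B_R(0)}}|f(u,\xi)-f(u',\xi)|.
\]
Being continuous, it is bounded on the compact interval $[-R,R]$, say by $C$; then $|f(x,u,\xi)|=|f(u,\xi)|\le C$ for all $(u,\xi)\in B$, so the constant function $a\equiv C$, which lies in $L^1(\Omega)$ because $\Omega$ is bounded, witnesses b).

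For condition c) I would fix $u\in W^{1,\infty}(\Omega)$, a bounded $\tilde B\subset\R^N$ and $\delta>0$. Since $\Omega$ is a bounded Lipschitz domain, $u$ has a continuous (indeed locally Lipschitz) representative, which I use. By inner regularity of the Lebesgue measure choose a compact $T\subset\Omega$ with $|\Omega\setminus T|<\delta$; then $u$ is uniformly continuous on $T$ and $u(T)\subset[-R',R']$ for some $R'$. By ii) together with the compactness of $[-R',R']$ there is a modulus of continuity $\omega'$ with $\sup_{\xi\in\tilde B}|f(v,\xi)-f(v',\xi)|\le\omega'(|v-v'|)$ for all $v,v'\in[-R',R']$, so that for $x_1,x_2\in T$
\[
\sup_{\xi\in\tilde B}\bigl|f(x_1,u(x_1),\xi)-f(x_2,u(x_2),\xi)\bigr|=\sup_{\xi\in\tilde B}\bigl|f(u(x_1),\xi)-f(u(x_2),\xi)\bigr|\le\omega'\bigl(|u(x_1)-u(x_2)|\bigr),
\]
which tends to $0$ as $|x_1-x_2|\to0$ by uniform continuity of $u$ on $T$. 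This is precisely the continuity of $x\mapsto f(x,u(x),\xi)$ on $T$, uniform as $\xi$ varies in $\tilde B$, so c) holds and the lemma follows.

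I do not anticipate a real obstacle: everything reduces to the elementary fact that Hypothesis \ref{Bas Hyp} makes $u\mapsto\bigl(\xi\mapsto f(u,\xi)\bigr)$ a continuous map into bounded functions on bounded sets, combined with the continuity of $W^{1,\infty}$ functions; the only points meriting a line of care are the passage to the continuous representative of $u$ and the use of inner regularity to produce $T$ in c).
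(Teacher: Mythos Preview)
Your proof is correct and follows essentially the same approach as the paper's: both dispatch a) and d) immediately, obtain b) by showing $f$ is bounded on bounded sets of $\R\times\R^N$ via the uniform continuity in $u$ combined with iii), and deduce c) by composing the modulus of continuity of $f$ in $u$ with the (Lipschitz) continuity of the $W^{1,\infty}$ function $u$. Your write-up is in fact slightly more explicit than the paper's about producing the compact $T$ via inner regularity, and note that the paper's own proof appears to have the labels b) and c) interchanged in its narrative.
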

\begin{proof}
    Hypotheses \ref{gen ipo} $a)$ and $d)$ are immediately verified and to prove b) we can observe, arguing as in Lemma \ref{unif cont case x}, that 
$f(u,\xi)$ is uniformly continuous with respect to $u$ in bounded sets of $\R$ uniformly as $\xi$ varies in bounded sets of $\R^N$.
\\
Thus for every $u\in W^{1,\infty}(\Omega)$ and $B_K(0)\subset \R^N$ there exists a non decreasing modulus of continuity $\omega(\cdot)$ such that
 \begin{equation}
     |f(\overline{u}(x),\xi)-f(\overline{u}(y),\xi)|\leq
    \omega(|\overline{u}(x)-\overline{u}(y)|)
     \leq\omega(M|x-y|)
 \end{equation} 
 for every $\xi$ in $B_K(0)$. So $f(\overline{u}(x),\xi)$ is uniformly continuous for a.e. $x\in \Omega$ uniformly as $\xi$ varies in $B_K(0)$.
\\
Finally to prove the validity of Hypothesis \ref{gen ipo} $c)$ it is sufficient to show that $f$ is bounded on bounded sets. Given $(u,\xi)\in[-M,M]\times B_K(0)$, the function $f(u,\xi)$ is uniformly continuous with respect to $u\in[-M,M]$ uniformly as $\xi$ varies on $B_K(0)$.
Furthermore for every $u\in\R$ then $f(u,\cdot)$ is bounded on $B_K(0)$ by Hypothesis \ref{Bas Hyp}.
\\
So there exists a finite set $\{u_i| i=1,\dots,N\}\subset [-M,M]$   such that
\begin{equation}
\label{bound}
    |f(u,\xi)|\leq 1+\max_{1\leq i \leq N}\big{\{}|f(u_i,\xi)|\big{\}}\leq 1+\max_{1\leq i \leq N}\bigg{\{}\sup_{\xi\in B_K(0)}\big{\{} |f(u_i,\xi)| \big{\}}\bigg{\}}\in \R
\end{equation}
for every $u\in [-M,M]$ and for every $\xi\in B_K(0)$.

\end{proof} 
% and that For every $u\in W^{1,\infty}(\Omega)$ and $B_K(0)\subset \R^n$ by Lemma \ref{unif cont} there exists a non decreasing modulus of continuity $\omega(\cdot)$ such that
% \begin{equation}
%     |f(\overline{u}(x),\xi)-f(\overline{u}(y),\xi)|\leq
%     \omega(|\overline{u}(x)-\overline{u}(y)|)
%     \leq\omega(M|x-y|)
% \end{equation}
% for every $\xi$ in $B_K(0)$.

% \begin{itemize}
%     \item[a)]By assumption $i)$ $f$ is a borel function.
%     \item[b)] By lemma \ref{lemma bound} $f$ is bounded on bounded sets,
%     \item[c)] For every $u\in W^{1,\infty}(\Omega)$ and $B_K(0)\subset \R^n$ by Lemma \ref{unif cont} there exists a non decreasing modulus of continuity $\omega(\cdot)$ such that
% \begin{equation}
%     |f(\overline{u}(x),\xi)-f(\overline{u}(y),\xi)|\leq
%     \omega(|\overline{u}(x)-\overline{u}(y)|)
%     \leq\omega(M|x-y|)
% \end{equation}

% for every $\xi$ in $B_K(0)$. So we can take $T:=\Omega$.
%     \item[d)] By assumption $ii)$
% \end{itemize}
We are ready to state the results in the autonomous case. 

\begin{proposition}
    \label{prop K}
Let  $\Omega$ be an open bounded Lipschitz subset of $\R^N$ and let $f:\R\times\R^N\to\R$ satisfy Hypothesis \ref{Bas Hyp}.
\\
For every $\overline{u}\in W^{1,\infty}(\Omega)$ and for every $K\in\N$ such that
\begin{equation}
    \|\overline{u}\|_{\infty}+\|\nabla \overline{u}\|_{\infty}<K
\end{equation}
there exists a sequence $u_{K,n}\in\overline{u}+W_0^{1,\infty}(\Omega)$ such that
\begin{equation}
  \|\nabla u_{K,n}\|_{\infty}<K\,,\quad \quad   u_{K,n}\overset{*}{\rightharpoonup}\overline{u}\quad\text{in} \quad W^{1,\infty}(\Omega)
\end{equation}
and
\begin{equation}
    \lim_{n\to \infty}\Bigg{|}\int_{\Omega}f(u_{K,n}(x),\nabla u_{K,n}(x))dx-\int_{\Omega} (f_K)^{**}(\overline{u}(x),\nabla\overline{u}(x))dx\Bigg{|} =0\,.
\end{equation}
\end{proposition}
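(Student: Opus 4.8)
The plan is to read off Proposition~\ref{prop K} directly from Proposition~\ref{prop K gen}. First I would invoke Lemma~\ref{ipotesi}: since $f:\R\times\R^N\to\R$ satisfies Hypothesis~\ref{Bas Hyp}, the function $(x,u,\xi)\mapsto f(u,\xi)$ on $\Omega\times\R\times\R^N$ satisfies Hypotheses~\ref{gen ipo}. Applying Proposition~\ref{prop K gen} to this function, with the given $\overline u\in W^{1,\infty}(\Omega)$ and $K\in\N$ such that $\|\overline u\|_\infty+\|\nabla\overline u\|_\infty<K$, produces a sequence $u_{K,n}\in\overline u+W_0^{1,\infty}(\Omega)$ with $\|\nabla u_{K,n}\|_\infty<K$, $u_{K,n}\overset{*}{\rightharpoonup}\overline u$ in $W^{1,\infty}(\Omega)$ and
\[
\lim_{n\to\infty}\Big|\int_\Omega f(u_{K,n}(x),\nabla u_{K,n}(x))\,dx-\int_\Omega (f_K)^{**}(\overline u(x),\nabla\overline u(x))\,dx\Big|=0,
\]
where I use that for an autonomous Lagrangian $(f_K)^{**}(x,u,\xi)$ does not depend on $x$. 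This is precisely the assertion, so the proof is essentially a one-line reduction.

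For completeness I would also indicate the direct argument, which simply retraces the proof of Proposition~\ref{prop K gen} in the simpler autonomous setting. Fix $K$ as above and set $\Tilde K:=K+1$. By the boundedness of $f$ on bounded sets established inside Lemma~\ref{ipotesi}, there is $C>0$ with $|f(u,\xi)|\le C$ for all $(u,\xi)\in[-\Tilde K,\Tilde K]\times B_K(0)$. Define $\Tilde f_K(x,\xi):=f(\overline u(x),\xi)+C$ on $\Omega\times B_K(0)$; since $\overline u$ is Lipschitz and, by Hypothesis~\ref{Bas Hyp}~ii), $f$ is uniformly continuous in $u$ uniformly in $\xi\in B_K(0)$, the composition $x\mapsto f(\overline u(x),\xi)$ is uniformly continuous in $x$ uniformly in $\xi$, so $\Tilde f_K$ satisfies Hypothesis~\ref{Ipo caso x} with dominating function the constant $2C$. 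Theorem~\ref{lemma} then gives $u_{K,n}\in\overline u+W_0^{1,\infty}(\Omega)$ with $\|\nabla u_{K,n}\|_\infty<K$, $\|u_{K,n}-\overline u\|_\infty\to 0$ (hence, the gradients being uniformly bounded, $u_{K,n}\overset{*}{\rightharpoonup}\overline u$ in $W^{1,\infty}$ up to a subsequence) and $\int_\Omega\Tilde f_K(x,\nabla u_{K,n})\,dx\to\int_\Omega(\Tilde f_K)^{**}(x,\nabla\overline u)\,dx$, that is $\int_\Omega f(\overline u(x),\nabla u_{K,n})\,dx\to\int_\Omega(f_K)^{**}(\overline u(x),\nabla\overline u(x))\,dx$. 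Finally, since $\|u_{K,n}-\overline u\|_\infty\to 0$ forces $\|u_{K,n}\|_\infty<\Tilde K$ eventually, Hypothesis~\ref{Bas Hyp}~ii) gives $f(u_{K,n}(x),\nabla u_{K,n}(x))-f(\overline u(x),\nabla u_{K,n}(x))\to 0$ a.e., and this difference is dominated by $2C$; Lebesgue's dominated convergence theorem together with a passage to a subsequence then yield the claim.

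I do not expect a genuine obstacle here. The only nontrivial point is that Hypothesis~\ref{Bas Hyp} implies the uniform-in-$\xi$ Scorza-Dragoni-type condition \ref{gen ipo}~c), and this has already been settled in Lemma~\ref{ipotesi}, where it is reduced to the boundedness of $f$ on bounded sets. Everything else is the routine reduction --- auxiliary function plus Lebesgue's theorem --- already carried out for the non-autonomous Proposition~\ref{prop K gen}.
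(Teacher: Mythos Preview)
Your proposal is correct and follows exactly the paper's own proof: Lemma~\ref{ipotesi} verifies Hypotheses~\ref{gen ipo} for the autonomous $f$, and then Proposition~\ref{prop K gen} yields the conclusion directly. The additional ``for completeness'' paragraph simply unpacks the proof of Proposition~\ref{prop K gen} in the autonomous setting, which is fine but not needed.
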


\begin{proof}In view of Lemma \ref{ipotesi} it is a immediate consequence of Proposition \ref{prop K gen}.

\end{proof}

\begin{theorem}
   \label{pro}
Let  $\Omega$ be an open bounded Lipschitz subset of $\R^N$ and let $f:\R\times\R^N\to\R$ satisfies Hypotheses \ref{Bas Hyp}.\\
For every $\overline{u}\in W^{1,\infty}(\Omega)$ exists a sequence $u_{n}\in\overline{u}+W_0^{1,\infty}(\Omega)$ such that
\begin{equation}
    \lim_{n\to \infty}\|u_{n}-\overline{u}\|_{\infty}=0
  \end{equation}
and
\begin{equation}
    \lim_{n\to \infty}\int_{\Omega}f(u_{n}(x),\nabla u_{n}(x))dx        
 =
    \int_{\Omega} f^{**}(\overline{u}(x),\nabla\overline{u}(x))dx\,.
\end{equation}
Furthermore
\begin{equation}
    sc^-(F_{\infty})(\overline{u})
    =\int_{\Omega} f^{**}(\overline{u}(x),\nabla\overline{u}(x))dx
\end{equation} 
where with $sc^-(F_{\infty})$ we denote the lower semicontinuous envelope of 
\begin{equation}
    F_{\infty}(u)=\int_{\Omega}{f}(u(x),\nabla u(x))dx
\end{equation}
with respect to the weak$^*$ topology of $W^{1,\infty}(\Omega)$.

\end{theorem}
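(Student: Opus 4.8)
The plan is to deduce the statement directly from its non-autonomous counterpart, Theorem \ref{pro gen}. The key observation is that an autonomous Lagrangian $f:\R\times\R^N\to\R$ may be regarded as a Lagrangian on $\Omega\times\R\times\R^N$ that happens not to depend on the variable $x$; by Lemma \ref{ipotesi}, under Hypotheses \ref{Bas Hyp} this function satisfies Hypotheses \ref{gen ipo}. Moreover, since the bipolar is taken fiberwise with respect to $\xi$, the convexified function $f^{**}$ computed in the autonomous sense coincides, when evaluated along $(\overline u(x),\nabla\overline u(x))$, with the one appearing in Theorem \ref{pro gen}, and similarly for the truncations $(f_K)^{**}$ defined in \eqref{def fK}.

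With this identification in place, the first part of the statement --- the existence of a sequence $u_n\in\overline u+W_0^{1,\infty}(\Omega)$ with $\|u_n-\overline u\|_\infty\to 0$ and $\int_\Omega f(u_n,\nabla u_n)\,dx\to\int_\Omega f^{**}(\overline u,\nabla\overline u)\,dx$ --- is precisely the first conclusion of Theorem \ref{pro gen} applied to the Lagrangian $(x,u,\xi)\mapsto f(u,\xi)$. In the same way, the representation formula $sc^-(F_\infty)(\overline u)=\int_\Omega f^{**}(\overline u,\nabla\overline u)\,dx$ for the weak$^*$ lower semicontinuous envelope on $W^{1,\infty}(\Omega)$ is the second conclusion of that theorem. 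Alternatively one could reproduce the argument from scratch in the autonomous setting: truncate to $f_K$, invoke Proposition \ref{prop K} to produce sequences $u_{K,n}\overset{*}{\rightharpoonup}\overline u$ with $\int_\Omega f(u_{K,n},\nabla u_{K,n})\,dx\to\int_\Omega (f_K)^{**}(\overline u,\nabla\overline u)\,dx$, let $K\to\infty$ using monotone convergence together with the domination furnished by the boundedness estimate \eqref{bound} in Lemma \ref{ipotesi}, and conclude with the Tonelli-type lower semicontinuity of the convex functionals $\int_\Omega (f_K)^{**}(u,\nabla u)\,dx$; but this merely repeats the proof of Theorem \ref{pro gen} word for word.

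Since Lemma \ref{ipotesi} already carries out the only nontrivial verification --- namely that Hypotheses \ref{Bas Hyp} imply Hypotheses \ref{gen ipo}, the delicate point being the boundedness of $f$ on bounded sets that yields Hypothesis \ref{gen ipo} $c)$ --- I do not expect any genuine obstacle here, and the proof reduces essentially to a citation of Theorem \ref{pro gen}.
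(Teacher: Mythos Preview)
Your proposal is correct and matches the paper's own proof exactly: the paper simply notes that, in view of Lemma \ref{ipotesi}, the result is an immediate application of Theorem \ref{pro gen}. Your additional remarks on the fiberwise bipolar and the alternative route through Proposition \ref{prop K} are accurate but go beyond what the paper itself spells out.
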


\begin{proof}
As for the previous Proposition we have only to notice that it is an immediate application of Theorem \ref{pro gen}. 

\end{proof}

\begin{remark} The assumption that $f$ is bounded on bounded sets in particular implies that, for every $K>0$, $(f_K)^{**}$ is bounded from below.
 \end{remark}
 
  \begin{remark} We notice that in our setting it could happen that
 $f^{**}\equiv-\infty$. In this case Theorem \ref{pro} implies that there exists a sequence in $u_n\in W^{1,\infty}(\Omega)$ such that $u_n$ converges to $\bar u$ in $L^\infty$ and 
\begin{equation}
    \lim_{n\to\infty}\int_{\Omega}f(u_n(x),\nabla u_n(x))dx=-\infty\,.
\end{equation} 
\end{remark}

\begin{remark}
    If $f(x,u,\xi):= g(x,u)+h(u,\xi)$ where $g$ is Carathedory and $h$ satisfies Hypotheses \ref{Bas Hyp} then $f(x,u,\xi)  $ satisfies Hypotheses \ref{gen ipo} but does not necessarily satisfy hypotheses assumed on \cite{ET} or on \cite{MS}.
\end{remark}

\noindent Now we can apply the previous results about Lavrentiev Phenomenon for the autonomous case.

\begin{theorem}
  \label{theo}
Let $\Omega$ be an open bounded Lipschitz subset of $\R^N$, let  $f:\R\times\R^N\rightarrow \R$ satisfy Hypotheses \ref{Bas Hyp}, $\varphi\in  W^{1,\infty}(\Omega)$ and $1\leq p<\infty$. 

If
\begin{equation}\label{noLavrConv}
    \inf_{\varphi+W^{1,p}_0(\Omega)}\int_{\Omega}f^{**}(u(x),\nabla u(x))dx=\inf_{\varphi+W^{1,\infty}_0(\Omega)}\int_{\Omega}f^{**}(u(x),\nabla u(x))dx\,.
\end{equation}
then
\begin{equation}
    \inf_{\varphi+W^{1,p}_0(\Omega)}\int_{\Omega}f(u(x),\nabla u(x))dx=\inf_{\varphi+W^{1,\infty}_0(\Omega)}\int_{\Omega}f(u(x),\nabla u(x))dx\,.
\end{equation}  
\end{theorem}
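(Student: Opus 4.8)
The plan is to reduce the statement to the already-established Theorem \ref{theo Lavr}, so the proof is essentially a one-line application. First I would invoke Lemma \ref{ipotesi}: since $f:\R\times\R^N\to\R$ satisfies Hypotheses \ref{Bas Hyp}, the (trivially non-autonomous) Lagrangian $(x,u,\xi)\mapsto f(u,\xi)$ satisfies Hypotheses \ref{gen ipo} on $\Omega\times\R\times\R^N$. One should also remark that the bipolar $f^{**}$ with respect to $\xi$ of the autonomous Lagrangian coincides with the bipolar of its non-autonomous reading, so that the two occurrences of $f^{**}$ in the statement are unambiguous and \eqref{noLavrConv} is exactly hypothesis \eqref{Ipo gen} of Theorem \ref{theo Lavr} written for $f(u,\xi)$.

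Next, with $\varphi\in W^{1,\infty}(\Omega)$ and $1\le p<\infty$ fixed as in the statement, I would simply apply Theorem \ref{theo Lavr} to the Lagrangian $f(u,\xi)$. Its hypothesis being satisfied by the previous paragraph, its conclusion gives
\[
\inf_{\varphi+W^{1,p}_0(\Omega)}\int_{\Omega}f(u(x),\nabla u(x))dx=\inf_{\varphi+W^{1,\infty}_0(\Omega)}\int_{\Omega}f(u(x),\nabla u(x))dx,
\]
which is precisely the desired identity.

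The only point that requires a (minor) argument is the verification that Hypotheses \ref{Bas Hyp} entail Hypotheses \ref{gen ipo}, and this is exactly the content of Lemma \ref{ipotesi}; the nontrivial ingredient there is that boundedness of $f$ on bounded sets together with uniform continuity in $u$ yields the Scorza--Dragoni-type condition \ref{gen ipo} c). Since that lemma is at our disposal, there is essentially no further obstacle. If one preferred to keep this section self-contained, one could instead re-run the short chain of inequalities in the proof of Theorem \ref{theo Lavr}, combining \eqref{noLavrConv} with the elementary bound $f^{**}\le f$ and using Theorem \ref{pro} (the autonomous relaxation result) in place of Theorem \ref{pro gen} to replace the infimum over $\varphi+W^{1,\infty}_0(\Omega)$ of $f$ by that of $f^{**}$; this, however, merely reproduces the earlier argument verbatim in the autonomous notation.
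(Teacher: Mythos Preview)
Your proposal is correct and matches the paper's own proof almost verbatim: the paper simply notes that, in view of Lemma \ref{ipotesi}, the result is an immediate consequence of Theorem \ref{theo Lavr}. Your additional remarks on the bipolar and on the alternative self-contained route via Theorem \ref{pro} are accurate but go beyond what the paper writes.
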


\begin{proof}
    In view of Lemma \ref{ipotesi} it is a immediate consequence of Theorem \ref{theo Lavr}. 
    
\end{proof}
Now we recall a recent result by Pierre Bousquet (\cite[Theorem 1.1]{Bousquet2023}) and then we will use it to prove Theorem \ref{Theo fin}.
\begin{theorem}
\label{theo Pierre}
    Let $\Omega$ be an open bounded Lipschitz subset of $\R^N$ and $\varphi\in  W^{1,\infty}(\Omega)$. Assume $g:\R\times\R^N\rightarrow \R^+$ continuous in both variable and convex with respect to the last variable. For every $u\in W^{1,1}_{\varphi}(\Omega) $ there exists a sequence $(u_n)_n\in W^{1,\infty}_{\varphi}(\Omega)$ such that $u_n$ strongly converges to $u$ in $W^{1,1}(\Omega)$ and
\begin{equation}
     \int_{\Omega} g(u_n(x),\nabla u_n(x))dx\to \int_{\Omega} g(u(x),\nabla u(x))dx\,.
\end{equation}
\end{theorem}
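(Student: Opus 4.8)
The plan is to reduce, by localization and then a change of variables adapted to the autonomous structure, to a situation as tame as the one‑dimensional case, where non‑occurrence of the gap is classical. \emph{Localization.} One covers $\Omega$ by finitely many small cubes $Q_k$ together with a thin collar of $\partial\Omega$; on the collar $\varphi$ itself is an admissible Lipschitz competitor, so nothing has to be done there (this is the only place where $\varphi\in W^{1,\infty}$, rather than merely $W^{1,1}$, is needed), while on each $Q_k$ one replaces the boundary datum by the affine function $\ell_k$ that best fits $\varphi$ on $Q_k$, the resulting error being controlled by $\|\nabla\varphi\|_\infty\operatorname{diam}Q_k$ together with the continuity of $g$. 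Since gluing $W^{1,1}$ competitors along finitely many cube faces is harmless, one is reduced to the model problem on a single cube $Q$: given $u=\ell+w$ with $\ell$ affine, $w\in W^{1,1}_0(Q)$ and $\int_Qg(u,\nabla u)<\infty$, produce $w_n\in W^{1,\infty}_0(Q)$ with $w_n\to w$ in $W^{1,1}(Q)$ and $\int_Qg(\ell+w_n,\nabla\ell+\nabla w_n)\to\int_Qg(u,\nabla u)$.

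\emph{The core construction.} In dimension one the model problem is solved by composing $u$ with the inverse of the absolutely continuous map $t\mapsto\int_0^t(1+|u'|)$: this yields a Lipschitz reparametrization $\tilde u$ whose (again autonomous, again nonnegative) energy is a transformed integrand, and one concludes by mollifying $\tilde u$ --- uniform approximation being available because $\tilde u$ is continuous --- and transforming back through a truncated reparametrization so as to keep the competitors Lipschitz. In higher dimension there is no canonical reparametrization, so the idea is to combine a Lusin‑type Lipschitz approximation of $u$ (there is $u_\lambda\in W^{1,\infty}$ with $u=u_\lambda$, hence $\nabla u=\nabla u_\lambda$ a.e., outside a set $E_\lambda$ with $|E_\lambda|\to0$, whence $\int_{E_\lambda}g(u,\nabla u)\to0$ by absolute continuity of the integral) with a careful treatment of a neighbourhood of $E_\lambda$: instead of extending $u_\lambda$ across $E_\lambda$ with a gradient of size $\sim\lambda$ --- which would be fatal in the absence of any growth control on $g$ --- one \emph{fills} $E_\lambda$ by a graph‑following construction that routes the competitor through values $(u,\nabla u)$ on which $g$ is already known to be integrable, so that the energy spent there is comparable to $\int_{E_\lambda}g(u,\nabla u)$ and hence negligible. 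This is exactly the step that forces translation invariance in $x$ into play: it is the autonomous analogue of the one‑dimensional reparametrization, and it is consistent with the existence of $x$‑dependent counterexamples of Zhikov type to the conclusion.

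\emph{Matching the two bounds.} Once a competitor $w_n$ is produced, the inequality $\liminf_n\int_Qg(\ell+w_n,\nabla\ell+\nabla w_n)\ge\int_Qg(u,\nabla u)$ is automatic, being the sequential weak lower semicontinuity of the convex integral functional (e.g.\ \cite[Chapter VIII, Theorem 2.1]{ET}), since $\ell+w_n\to u$ in $W^{1,1}$. The reverse inequality is the substantive one: on the part of $Q$ where the competitor is the mollification of $u_\lambda$ --- where the gradient lives on the bounded scale $\sim\lambda$ --- it follows from Jensen's inequality applied to the convexity of $g$ in the last variable, the mollified zeroth‑order argument being replaced by the true one up to the modulus of continuity of $g$ on a bounded set (which is legitimate precisely because $\nabla u_\lambda$ is bounded and $u_\lambda$ is continuous), and on the filled neighbourhood of $E_\lambda$ it follows from the above negligibility. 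The delicate, technical heart of the argument --- and the main obstacle --- is exactly the construction of the filling on $E_\lambda$ with controlled energy: with no upper bound on the growth of $g$ one cannot afford competitors whose gradients are merely ``bounded by the truncation level'', and it is here that the autonomous structure must be exploited in full.
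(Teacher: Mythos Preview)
The paper does not prove this theorem. It is quoted as \cite[Theorem~1.1]{Bousquet2023} and used as a black box in the proof of Theorem~\ref{Theo fin}; there is no ``paper's own proof'' to compare your proposal against.

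As for the proposal itself: the outline is plausible and you correctly locate where the difficulty sits, but what you have written is a plan, not a proof. The decisive step --- the ``filling'' of the exceptional set $E_\lambda$ by a competitor whose energy is controlled by $\int_{E_\lambda} g(u,\nabla u)$ --- is stated as a desideratum and then left undone. You yourself call it ``the delicate, technical heart of the argument --- and the main obstacle'', and the sentence that follows (``one \emph{fills} $E_\lambda$ by a graph-following construction that routes the competitor through values $(u,\nabla u)$ on which $g$ is already known to be integrable'') is a description of what one would like to achieve, not a construction. Without this step the argument has no content: Lusin--Lipschitz truncation alone is well known to be insufficient when $g$ has no upper growth bound, precisely because extending $u_\lambda$ across $E_\lambda$ may cost arbitrarily much, and invoking ``the autonomous analogue of the one-dimensional reparametrization'' in higher dimension is a slogan rather than a mechanism. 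The localization paragraph is also too quick: replacing the boundary datum on each cube by an affine approximant and saying the error is ``controlled by $\|\nabla\varphi\|_\infty\operatorname{diam}Q_k$ together with the continuity of $g$'' tacitly uses a growth bound on $g$ that you do not have. In short, the skeleton is in the right spirit, but the load-bearing piece is missing, and that piece is essentially the whole theorem.
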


%\begin{remark}
 %   Theorem \ref{theo Pierre} implies there is no Lavrentiev Phenomenon for autonomous continuous Lagrangians convex w.r.t. last variable, i.e. for every $\varphi\in W^{1,\infty}(\Omega)$ 
  %  \begin{equation}
   %    \inf_{u\in\varphi+ W^{1,1}_{0}(\Omega)} \int_{\Omega} g(u(x),\nabla u(x))dx= \inf_{u\in \varphi+W^{1,\infty}_{0}(\Omega)}\int_{\Omega} g(u(x),\nabla u(x))dx\,.
%\end{equation}
%Furthermore this result implies that
%\begin{equation}
 %   sc^-(\overline{F_1}(u))=\int_{\Omega}g(u(x),\nabla u(x))dx
%\end{equation}
%for every $u\in W^{1,1}_{\varphi}(\Omega)$.
%\end{remark}

% If we suppose that $g=f^{**}$ is coninuous we can apply the previous theorem to autonomous Lagrangians which satisfy Hypotheses \ref{Bas Hyp}
\begin{theorem}
    Let $\Omega$ be an open bounded Lipschitz subset of $\R^N$. Let $\varphi\in  W^{1,\infty}(\Omega)$ and assume $f:\R\times\R^N\to\R^+$ satisfies Hypotheses \ref{Bas Hyp} and $f^{**}$ continuous in both variable, then
    \begin{equation}
       \inf_{u\in\varphi+ W^{1,1}_{0}(\Omega)} \int_{\Omega} f(u(x),\nabla u(x))dx= \inf_{u\in\varphi+ W^{1,\infty}_{0}(\Omega)}\int_{\Omega} f(u(x),\nabla u(x))dx\,.
\end{equation}
\end{theorem}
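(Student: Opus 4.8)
The plan is to deduce the statement by chaining two tools already available: the Lavrentiev transfer principle for autonomous Lagrangians (Theorem \ref{theo}), which reduces the non occurrence of the Lavrentiev phenomenon for $f$ to the same property for the convexified Lagrangian $f^{**}$, together with the recent approximation result of Bousquet (Theorem \ref{theo Pierre}), which applies precisely to continuous, nonnegative, convex (in $\xi$) Lagrangians. In other words, the whole argument is: verify that $f^{**}$ is admissible for Theorem \ref{theo Pierre}, use it to obtain the equality of the two infima of $\int_\Omega f^{**}$, and then feed this equality into Theorem \ref{theo} with $p=1$.

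First I would check that $g:=f^{**}$ satisfies the hypotheses of Theorem \ref{theo Pierre}. Since $f\ge 0$, the constant $0$ is a convex and lower semicontinuous minorant of $f(u,\cdot)$, whence $f^{**}\ge 0$; by the very definition of the bipolar, $f^{**}(u,\cdot)$ is convex (and lower semicontinuous) with respect to $\xi$; and continuity of $f^{**}$ in both variables is assumed in the statement (so in particular $f^{**}$ is finite-valued, consistently with the earlier remark that a priori $f^{**}\equiv-\infty$ could occur). Hence $f^{**}:\R\times\R^N\to\R^+$ is continuous in both variables and convex with respect to the last one, so Theorem \ref{theo Pierre} may be applied to it.

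Next, fix an arbitrary $u\in\varphi+W^{1,1}_0(\Omega)=W^{1,1}_\varphi(\Omega)$. Theorem \ref{theo Pierre} provides a sequence $u_n\in\varphi+W^{1,\infty}_0(\Omega)$ with $u_n\to u$ strongly in $W^{1,1}(\Omega)$ and $\int_\Omega f^{**}(u_n,\nabla u_n)\,dx\to\int_\Omega f^{**}(u,\nabla u)\,dx$. This forces
\[
\inf_{\varphi+W^{1,\infty}_0(\Omega)}\int_\Omega f^{**}(v,\nabla v)\,dx\ \le\ \int_\Omega f^{**}(u,\nabla u)\,dx
\]
for every such $u$, hence $\inf_{\varphi+W^{1,\infty}_0}\int_\Omega f^{**}\le\inf_{\varphi+W^{1,1}_0}\int_\Omega f^{**}$; the reverse inequality is trivial from the inclusion $\varphi+W^{1,\infty}_0(\Omega)\subset\varphi+W^{1,1}_0(\Omega)$. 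Therefore the two infima of $\int_\Omega f^{**}$ coincide, which is exactly assumption \eqref{noLavrConv} of Theorem \ref{theo} with $p=1$. Since $f$ satisfies Hypotheses \ref{Bas Hyp}, Theorem \ref{theo} then transfers this equality to $f$ itself and yields the claimed identity.

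The proof is thus essentially a formal assembly of the two cited theorems, and the only point deserving attention is the verification that $f^{**}$ meets the hypotheses of Bousquet's theorem: nonnegativity comes from $f\ge 0$, convexity from the definition of the bipolar, and continuity (hence finiteness) is part of the assumptions; the remaining inequality between the infima over $W^{1,\infty}$ and $W^{1,1}$ is immediate from the inclusion of the spaces.
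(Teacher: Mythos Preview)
Your proof is correct and follows exactly the same two-step route as the paper: apply Theorem \ref{theo Pierre} to $f^{**}$ to obtain the equality of the two infima for the convexified functional, then invoke Theorem \ref{theo} with $p=1$ to transfer this equality to $f$. Your write-up is in fact more detailed than the paper's, as you explicitly verify that $f^{**}$ meets the hypotheses of Theorem \ref{theo Pierre} (nonnegativity from $f\ge 0$, convexity in $\xi$ by definition of the bipolar, continuity by assumption).
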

\begin{proof}We can apply Theorem \ref{theo Pierre} to $f^{**}$ and so we note that
\begin{equation}
       \inf_{u\in\varphi+ W^{1,1}_{0}(\Omega)} \int_{\Omega} f^{**}(u(x),\nabla u(x))dx= \inf_{u\in \varphi+W^{1,\infty}_{0}(\Omega)}\int_{\Omega} f^{**}(u(x),\nabla u(x))dx\,.
\end{equation}
Now the assumptions of Theorem \ref{theo} are satisfied.

\end{proof}

%\begin{remark}
%We can assume $f$ bounded from below instead of $f$ positive. 
%\end{remark}

\begin{remark} In general, under Hypothesis \ref{Bas Hyp}, the function $(u,\xi)\rightarrow f^{**}(u,\xi)$ could be not continuous (cfr \cite[Example 3.9]{MS}). In \cite[Corollary 3.12]{MS} the authors proved that if
\begin{enumerate}
    \item[i)] $f(u,\xi)$ is continuous in $u$ uniformly with respect to $\xi\in \R^N$
\end{enumerate}
or
\begin{enumerate}
    \item[ii)] $f(u,\xi)\geq \lambda_1\|\xi\|^{\alpha}+\lambda_2$ with $\alpha>1$, $\lambda_1>0 $ and $\lambda_2\in \R$ 
\end{enumerate}
then $f^{**}(u,\xi)$ is continuous in both variables. 
\end{remark}

In the next lemma we prove that Hypothesis \ref{Bas Hyp} implies  that $(f_K)^{**}$ is continuous on $\R\times B_K(0)$ for every $K\in\N$ and that $f^{**}$ is a Borel function on $\R\times\R^N$.

\begin{lemma}
    \label{fstarcont} 
If $f:\R\times\R^n\to\R$ satisfies Hypothesis \ref{Bas Hyp} then $(f_K)^{**}$ is continuous on $\R\times B_K(0)$  for every $K\in\N$.
Furthermore $f^{**}$ is a borelian function on $\R\times \R^N$.
\end{lemma}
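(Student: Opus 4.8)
The plan is to reduce the statement to the $x$-dependent case already handled in Lemma~\ref{fstarcont case x}, exploiting the fact that in the autonomous setting the variable $u\in\R$ plays exactly the role that $x\in\Omega$ played there (namely, it is a ``parameter'' with respect to which we need a uniform-continuity statement in order to transfer the continuity to the convexified function). First I would fix $K\in\N$ and restrict attention to the function $g_K:\R\times B_K(0)\to\R$ defined by $g_K(u,\xi):=f(u,\xi)$ for $\|\xi\|\le K$; this is exactly the restriction whose bipolar (in $\xi$) is $(f_K)^{**}$ on $\R\times B_K(0)$. By Hypothesis~\ref{Bas Hyp}(iii) the function $f(u,\cdot)$ is bounded on $B_K(0)$ for each $u$, and by arguing as in Lemma~\ref{ipotesi} (the estimate \eqref{bound}) one sees that $g_K$ is bounded on every set $[-M,M]\times B_K(0)$; moreover Hypothesis~\ref{Bas Hyp}(ii), again upgraded to \emph{uniform} continuity on bounded $u$-intervals as in Lemma~\ref{unif cont case x}, gives that $g_K(u,\xi)$ is uniformly continuous in $u$ on each $[-M,M]$, uniformly as $\xi$ ranges in $B_K(0)$.

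Next I would run the argument of Lemma~\ref{fstarcont case x} verbatim with $u$ in place of $x$: fix $u_0$, pick $\eta>0$ so that $|g_K(u,\xi)-g_K(u_0,\xi)|<\varepsilon$ for all $\xi\in B_K(0)$ when $|u-u_0|<\eta$; then $(f_K)^{**}(u,\xi)-\varepsilon\le g_K(u,\xi)-\varepsilon\le g_K(u_0,\xi)$, and since the left-hand side is convex in $\xi$ it lies below $(f_K)^{**}(u_0,\xi)$; symmetry in $u,u_0$ gives $|(f_K)^{**}(u,\xi)-(f_K)^{**}(u_0,\xi)|<\varepsilon$, i.e.\ continuity in $u$ uniform in $\xi\in B_K(0)$. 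Combining this with the continuity of $(f_K)^{**}(u_0,\cdot)$ on the open ball $B_K(0)$ (automatic, since a finite convex function on an open convex set is continuous) through the same two-term triangle-inequality estimate as in \eqref{cont2var case x} yields joint continuity of $(f_K)^{**}$ on $\R\times B_K(0)$.

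Finally, for the Borel measurability of $f^{**}$ on $\R\times\R^N$, I would invoke the monotone pointwise convergence $(f_K)^{**}\downarrow f^{**}$ already recorded in the excerpt (the sequence $(f_K)^{**}$ is nonincreasing in $K$ and converges pointwise to $f^{**}$ on $\R\times\R^N$): each $(f_K)^{**}$, being continuous on $\R\times B_K(0)$, extends to a Borel function on $\R\times\R^N$ (set it to $+\infty$ outside $\|\xi\|\le K$, or note that $(f_K)^{**}$ as a convex l.s.c.\ function is automatically Borel), and a pointwise limit of Borel functions is Borel. I expect the only genuinely nontrivial point to be the boundedness/finiteness bookkeeping that makes the convexification behave well — one must be sure that on $\R\times B_K(0)$ the function $(f_K)^{**}$ is real-valued (which follows because $g_K\ge$ some affine minorant thanks to the local boundedness from \eqref{bound}, so the bipolar is finite) so that the ``$-\varepsilon$ then use convexity'' trick is legitimate; everything else is a transcription of the earlier lemmas with the substitution $x\leftrightarrow u$.
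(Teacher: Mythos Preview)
Your proposal is correct and follows essentially the same route as the paper's own proof: the paper too runs the ``$-\varepsilon$ then use convexity'' trick with $u$ in place of $x$, invokes the uniform continuity of $f(\cdot,\xi)$ on bounded $u$-intervals to get symmetry, combines with continuity of the convex function $(f_K)^{**}(u_0,\cdot)$ via the two-term triangle inequality, and then deduces Borel measurability of $f^{**}$ from the pointwise limit $(f_K)^{**}\downarrow f^{**}$. Your added remark on the finiteness of $(f_K)^{**}$ (needed to make the subtraction of $\varepsilon$ legitimate) is a point the paper leaves implicit.
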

\begin{proof}

First of all we prove  that $(f_K)^{**}(u,\xi)$ is continuous w.r.t. $u$ uniformly as $\xi$ varies in $B_K(0)$.  We fix $u_0\in \R$ and Hypothesis \ref{Bas Hyp}, $ii)$ guarantees that for every $\varepsilon>0$ there exists $\delta>0$ such that if $|u-u_0|<\delta$ then
\begin{equation}
    |f(u,\xi)-f(u_0,\xi)|<\varepsilon \quad \forall\xi\in B_K(0)
\end{equation}
and so we have
\begin{equation}
\label{cont uni}
    (f_K)^{**}(u,\xi)-\varepsilon\leq f(u,\xi)-\varepsilon\leq f(u_0,\xi)\,.
\end{equation}
Now $(f_K)^{**}(u,\xi)-\varepsilon$ is convex in $\xi$ and and so 
\begin{equation}
    (f_K)^{**}(u,\xi)-\varepsilon\leq (f_K)^{**}(u_0,\xi)\,.
\end{equation}
Since  $f(\cdot,\xi)$ is uniformly continuous in $[-M,M]$ uniformly as $\xi$ varies in $B_K(0)$, thus we can change the roles of
 $u$ and $u_0$ in the previous inequalities and we obtain 
\begin{equation}
    |(f_K)^{**}(u,\xi)-(f_K)^{**}(u_0,\xi)|<\varepsilon \quad \forall\xi\in B_K(0)\,.
\end{equation} 
 Since $(f_K)^{**}(u,\cdot)$ is continuous in $B_K(0)$ for every $u$, for every sequence $(u_n,\xi_n)\in \R\times B_K(0)$ converging to $(u_0,\xi_0)\in \R\times B_K(0)$ such that 
\begin{align}
\label{cont2var}
\lim_{n}\,|(f_K)^{**}(u_n,\xi_n)&-(f_K)^{**}(u_0,\xi_0)|\\
     \leq &\lim_{n}|(f_K)^{**}(u_n,\xi_n)-(f_K)^{**}(u_0,\xi_n)|\\&+\lim_{n}|(f_K)^{**}(u_0,\xi_n)-(f_K)^{**}(u_0,\xi_0)|\\=&0   
\end{align}
i.e. $(f_K)^{**}$ is continuous on $\R\times B_K(0)$.

Now, recalling  that
\begin{equation}
    f^{**}(u,\xi)=\inf_K(f_K)^{**}(u,\xi)=\lim_K(f_K)^{**}(u,\xi), 
\end{equation}
the continuity of $(f_K)^{**}$  implies that $f^{**}$ is borelian on $\R\times\R^N$.

\end{proof}

 \begin{corollary}
 \label{corollario f_K}
       Let $\Omega$ be an open bounded Lipschitz subset of $\R^N$ and let $f:\R\times\R^N\to\R$ satisfies Hypothesis \ref{Bas Hyp}. If for every $K>0$ there exist $K'\geq K$ such that 
\begin{equation}
    (f_{K'})^{**}_{|\R\times B_K}=(f^{**})_{|\R\times B_K}
\end{equation}
then $f^ {**}$ is continuous in both variables.
    \end{corollary}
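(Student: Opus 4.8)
The idea is that continuity is a local property, so it suffices to check continuity of $f^{**}$ on each set $\R\times B_K(0)$ with $K\in\N$; on such a set the standing hypothesis lets us replace $f^{**}$ by one of the truncated bipolars $(f_{m})^{**}$, whose continuity is guaranteed by Lemma \ref{fstarcont}. Concretely, I would fix an arbitrary point $(u_0,\xi_0)\in\R\times\R^N$, choose $K\in\N$ with $\|\xi_0\|<K$ so that $\R\times B_K(0)$ is an open neighbourhood of $(u_0,\xi_0)$, and apply the hypothesis to obtain $K'\ge K$ with $(f_{K'})^{**}=f^{**}$ on $\R\times B_K(0)$.

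The next step is to upgrade $K'$ to an integer. Pick any $m\in\N$ with $m\ge K'$. Since $L\mapsto(f_L)^{**}$ is non-increasing and converges pointwise to $f^{**}$ (as noted after \eqref{def fK}), on $\R\times B_K(0)$ we have $f^{**}\le(f_m)^{**}\le(f_{K'})^{**}=f^{**}$, whence $(f_m)^{**}\equiv f^{**}$ there. Now Lemma \ref{fstarcont} gives that $(f_m)^{**}$ is continuous on $\R\times B_m(0)$, and since $B_K(0)\subseteq B_m(0)$ it is in particular continuous on $\R\times B_K(0)$; therefore $f^{**}$ is continuous on $\R\times B_K(0)$, hence at $(u_0,\xi_0)$. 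As $(u_0,\xi_0)$ was arbitrary, $f^{**}$ is continuous on $\R\times\R^N$.

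I do not expect a genuine obstacle in this argument; the only point that needs a little care is the passage from the real number $K'$ produced by the hypothesis to a natural number $m\ge K'$, which is exactly what the monotonicity of the sequence $(f_L)^{**}$ allows, together with the elementary fact that the restriction of a continuous function to an open subset is again continuous.
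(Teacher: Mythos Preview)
Your argument is correct and follows essentially the same route as the paper's proof: restrict $f^{**}$ to each open set $\R\times B_K(0)$, identify it there with a truncated bipolar, and invoke Lemma \ref{fstarcont}. The paper's two-line proof simply applies Lemma \ref{fstarcont} directly to $(f_{K'})^{**}$ and concludes; your extra step of upgrading the real $K'$ to an integer $m\ge K'$ via the monotonicity of $L\mapsto (f_L)^{**}$ is a careful way to stay within the literal statement of Lemma \ref{fstarcont} (which is formulated for $K\in\N$), but it is not a different idea.
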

\begin{proof}
    By Lemma \ref{fstarcont} we know that  $(f_{K'})^{**}_{|\R\times B_K}$ is continuous in both variables. Thus every restriction of $f^{**}$ is continuous and so $f^{**}$ is globally continuous in both variables.
    
\end{proof}
Now we state a relaxation result that holds under the assumption of uniform superlinearity of the Lagrangian. 
\begin{theorem}
\label{Theo fin}
Let $\Omega$ be an open bounded Lipschitz subset of $\R^N$. Let $\varphi\in  W^{1,\infty}(\Omega)$ and assume $f:\R\times\R^N\to\R^+$ satisfies Hypothesis \ref{Bas Hyp}, be uniformly superlinear, i.e. there exist a superlinear function $\theta:\R^n\to \R$ such that 
 \begin{equation}
     f(u,\xi)\geq \theta (\xi)\quad \forall u\in \R
 \end{equation}
and $f^{**}$ be continuous in both variables.\\
Then for every $u\in \varphi+W^{1,1}_0(\Omega)$
\begin{equation}
    sc^-(\overline{F_1})(u)=\int_{\Omega}f^{**}(u(x),\nabla u(x))dx
\end{equation}
where $sc^-(\overline{F_1}$) is the lower semicontinuous envelope of 
\begin{equation}
    \overline{F_1}(u)=\begin{cases}
        \int_{\Omega}f(u(x),\nabla u(x))dx\quad \text{if}\quad u\in \varphi+W^{1,\infty}_0(\Omega)
        \\
        +\infty \quad \text{if}\quad u\in \varphi+W^{1,1}_0(\Omega)\setminus W^{1,\infty}_0(\Omega)
    \end{cases}
\end{equation}
with respect to the weak topology of $W^{1,1}(\Omega)$.
\end{theorem}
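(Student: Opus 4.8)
Fix $u\in\varphi+W^{1,1}_0(\Omega)$. The plan is to prove separately the two inequalities $sc^-(\overline{F_1})(u)\ge\int_\Omega f^{**}(u,\nabla u)\,dx$ and $sc^-(\overline{F_1})(u)\le\int_\Omega f^{**}(u,\nabla u)\,dx$. For the first one, note that $f\ge 0$ forces $f^{**}\ge 0$, and by hypothesis $f^{**}$ is continuous, hence a Carathéodory integrand convex in $\xi$; consequently $v\mapsto\int_\Omega f^{**}(v(x),\nabla v(x))\,dx$ is sequentially weakly lower semicontinuous on $W^{1,1}(\Omega)$ (if $v_n\rightharpoonup v$ weakly in $W^{1,1}$ then, by the compact embedding into $L^1(\Omega)$, $v_n\to v$ strongly in $L^1$ while $\nabla v_n\rightharpoonup\nabla v$ weakly in $L^1$, and one invokes \cite[Chapter VIII, Theorem 2.1]{ET}, exactly as in the proof of Theorem~\ref{pro gen}). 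Since moreover $\int_\Omega f^{**}(v,\nabla v)\,dx\le\overline{F_1}(v)$ for every $v\in\varphi+W^{1,1}_0(\Omega)$ — trivially where $\overline{F_1}=+\infty$, and by $f^{**}\le f$ on $\varphi+W^{1,\infty}_0(\Omega)$ — the functional $v\mapsto\int_\Omega f^{**}(v,\nabla v)\,dx$ is a sequentially weakly lsc minorant of $\overline{F_1}$, so it does not exceed $sc^-(\overline{F_1})$, which yields the lower bound.

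For the upper bound, if $\int_\Omega f^{**}(u,\nabla u)\,dx=+\infty$ there is nothing to prove, so assume it is finite, and construct a recovery sequence in two steps. First, $f^{**}:\R\times\R^N\to\R^+$ is continuous and convex in the last variable, so Theorem~\ref{theo Pierre} applied with $g=f^{**}$ gives $w_m\in W^{1,\infty}_\varphi(\Omega)=\varphi+W^{1,\infty}_0(\Omega)$ with $w_m\to u$ strongly in $W^{1,1}(\Omega)$ and $\int_\Omega f^{**}(w_m,\nabla w_m)\,dx\to\int_\Omega f^{**}(u,\nabla u)\,dx$. Second, each $w_m$ is Lipschitz, so Theorem~\ref{pro} applied with $\overline u=w_m$ yields $v_{m,k}\in w_m+W^{1,\infty}_0(\Omega)=\varphi+W^{1,\infty}_0(\Omega)$ with $\|v_{m,k}-w_m\|_\infty\to 0$ and $\int_\Omega f(v_{m,k},\nabla v_{m,k})\,dx\to\int_\Omega f^{**}(w_m,\nabla w_m)\,dx$ as $k\to\infty$. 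For each $m$ I choose $k(m)$ so large that $\|v_{m,k(m)}-w_m\|_\infty<1/m$ and $\bigl|\int_\Omega f(v_{m,k(m)},\nabla v_{m,k(m)})\,dx-\int_\Omega f^{**}(w_m,\nabla w_m)\,dx\bigr|<1/m$, and set $v_m:=v_{m,k(m)}$. Then $v_m\in\varphi+W^{1,\infty}_0(\Omega)$, hence $\overline{F_1}(v_m)=\int_\Omega f(v_m,\nabla v_m)\,dx$, and combining the two limits gives $\int_\Omega f(v_m,\nabla v_m)\,dx\to\int_\Omega f^{**}(u,\nabla u)\,dx$, while $\|v_m-w_m\|_\infty\to 0$ together with $w_m\to u$ in $L^1$ gives $v_m\to u$ strongly in $L^1(\Omega)$.

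The remaining point is to upgrade $v_m\to u$ in $L^1$ to $v_m\rightharpoonup u$ weakly in $W^{1,1}(\Omega)$, and this is where I expect the main obstacle to lie, since the Lipschitz constants of the $v_m$ produced by Theorem~\ref{pro} are not controlled uniformly in $m$. This is precisely where uniform superlinearity is used (and it is used only here): replacing $\theta$ by $\theta\vee 0$, which is still superlinear because $f\ge 0$, we get $\int_\Omega\theta(\nabla v_m)\,dx\le\int_\Omega f(v_m,\nabla v_m)\,dx$, a bounded quantity in $m$; by the de la Vallée Poussin criterion the family $(\nabla v_m)_m$ is then equi-integrable in $L^1(\Omega;\R^N)$, hence weakly relatively compact by the Dunford--Pettis theorem. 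Along a subsequence $\nabla v_m\rightharpoonup g$ weakly in $L^1$, and testing against $\phi\in C_c^\infty(\Omega;\R^N)$ while using $v_m\to u$ in $L^1$ forces $\int_\Omega g\cdot\phi\,dx=-\int_\Omega u\,\operatorname{div}\phi\,dx=\int_\Omega\nabla u\cdot\phi\,dx$, i.e. $g=\nabla u$; thus $v_m\rightharpoonup u$ in $W^{1,1}(\Omega)$ along this subsequence, with $\overline{F_1}(v_m)\to\int_\Omega f^{**}(u,\nabla u)\,dx$. By the very definition of the sequential weak lower semicontinuous envelope this gives $sc^-(\overline{F_1})(u)\le\int_\Omega f^{**}(u,\nabla u)\,dx$, which together with the lower bound completes the proof.
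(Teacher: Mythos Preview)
Your proposal is correct and follows essentially the same approach as the paper's own proof: both apply Theorem~\ref{theo Pierre} to $f^{**}$ to pass from $u\in\varphi+W^{1,1}_0(\Omega)$ to Lipschitz approximants, then Theorem~\ref{pro} to each approximant, extract a diagonal sequence, and finally invoke uniform superlinearity (via de la Vall\'ee Poussin / Dunford--Pettis) to upgrade $L^1$ convergence to weak $W^{1,1}$ convergence, with the lower bound coming from \cite[Chapter~VIII, Theorem~2.1]{ET}. Your write-up is in fact somewhat more explicit than the paper's in separating the two inequalities and in handling the case $\int_\Omega f^{**}(u,\nabla u)\,dx=+\infty$.
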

\begin{proof}
By theorem \ref{theo Pierre} for every $u\in{\varphi}+W^{1,1}_0(\Omega)$ there exists a sequence $(u_k)_k\subset {\varphi}+W^{1,\infty}_0(\Omega)$
such that
\begin{equation}
    u_k\to u\quad \text{strongly in } W^{1,1}(\Omega)
\end{equation}
and
\begin{equation}
    \lim_k\int_{\Omega}f^{**}(u_k(x),\nabla u_k(x))dx=
    \int_{\Omega}f^{**}(u(x),\nabla u(x))dx\,.
\end{equation}
By Theorem \ref{pro}, for every $u_k$ there exist a sequence $(u^{(n)}_k)_n\subset {\varphi}+ W^{1,\infty}_0(\Omega)$ such that
\begin{equation}
   u^{(n)}_k\to u_k\quad \text{in}\quad L^{\infty}(\Omega)  
\end{equation}
and 
\begin{equation}
    \lim_n\int_{\Omega}f(u^{(n)}_k(x),\nabla u^{(n)}_k(x))dx=
    \int_{\Omega}f^{**}(u_k(x),\nabla u_k(x))dx\,.
\end{equation}
Thus there exists a sequence $u_n\in {\varphi}+W^{1,\infty}_0(\Omega)$ such that 
\begin{equation}
    u_n\to_{L^1} u
\end{equation}
and
\begin{equation}
    \lim_n\int_{\Omega}f(u_n(x),\nabla u_n(x))dx=
    \int_{\Omega}f^{**}(u(x),\nabla u(x))dx\,.
\end{equation}
Since $f(u,\xi)$ is uniformly superlinear there exists $\theta$ superlinear such that
\begin{equation}
    \sup_n\int_{\Omega}\theta(\nabla u_n(x))dx\leq\sup_n \int_{\Omega} f(u_n(x),\nabla u_n(x))dx<+\infty
    \label{stima theta}
\end{equation}
thus, unless considering a subsequence, $\nabla u_n$ converges weakly in $L^1$ to some $v\in L^1$. It is easy to see (using the integration by parts) that actually $v=\nabla u$.
\\
So we have 

\begin{equation}
    u_n\rightharpoonup_{W^{1,1}} u\,.
\end{equation}
Since by \cite[Chapter VIII, Theorem 2.1, page 243]{ET} 
\begin{equation}
    \int_{\Omega}f^{**}(u(x),\nabla u(x))dx\leq 
    \int_{\Omega}f(v_n(x),\nabla v_n(x))dx
\end{equation}
for every $v_n\rightharpoonup_{W^{1,1}}u$ then
\begin{equation}
    sc^-(\overline{F_1})(u)=\int_{\Omega}f^{**}(u(x),\nabla u(x))dx\,.
\end{equation}
\end{proof}

%\begin{remark}
 %   Actually if $f(u,\xi)$ is locally uniformly superlinear, i.e. for every $[-M,M]$ there exists $\theta: \R^N\to\R$ superlinear such that 
 %   \begin{equation}
 %       f(u,\xi)\geq \theta (\xi)
 %   \end{equation}
 %   for every $u\in[-M,M]$ and for every $\xi\in \R^N$, then $f^{**}$ is continuous in both variables.
 %   The proof will be shown in a future work with Paulin Huguet.
 %   Thus in Theorem \ref{Theo fin} the Hypothesis $f$ uniformly superlinear implies $f^{**}$ continuous in both variable.
%\end{remark}
\begin{remark}
    An interesting question regards which conditions guarantee the continuity of $f^{**}$ in both variables. A first result in this direction will be present in a forthcoming paper, in collaboration with Paulin Huguet.
\end{remark}

\begin{remark} In general the result of Theorem \ref{Theo fin} for non autonomous Lagrangians is false and it is possible to find controexamples in \cite{Bousquet2025}.
\end{remark}

\section*{Acknowledgment}
Firstly I want to thank my supervisor Giulia Treu for her important help in the writing of this paper. The results presented in this paper had also benefited a lot by the discussions with Pierre Bousquet and Paulin Huguet during my visit in Toulouse. This research has been funded by the GNAMPA project "Fenomeno di Lavrentiev, Bounded Slope Condition e regolarità per minimi di funzionali integrali con crescite non standard e lagrangiane non uniformemente convesse" (2024).

\end{document}